\numberwithin{equation}{section}
\newtheorem{prop}{Proposition}[section]
\newtheorem{theo}[prop]{Theorem}
\newtheorem{lemm}[prop]{Lemma}
\newtheorem{rema}[prop]{Remark}
\newtheorem{defi}[prop]{Definition}
\newtheorem{ques}[prop]{Question}
\def\and{\quad{\rm and}\quad}
\def\<{\langle}
\def\>{\rangle}
\title{A Liouville Theorem for M\"{o}bius Invariant Equations}
\begin{document}

\author[YanYan Li]{YanYan Li}
\address{Department of Mathematics, Rutgers University, Hill Center, Busch Campus, 110 Frelinghuysen Road, Piscataway, NJ 08854, USA.}
\email{yyli@math.rutgers.edu}

\author[Han Lu]{Han Lu}
\address{Department of Mathematics, Rutgers University, Hill Center, Busch Campus, 110 Frelinghuysen Road, Piscataway, NJ 08854, USA.}
\email{hl659@math.rutgers.edu}

\author[Siyuan Lu]{Siyuan Lu}
\address{Department of Mathematics and Statistics, McMaster University, 1280 Main Street West, Hamilton, ON, L8S 4K1, Canada.}
\email{siyuan.lu@mcmaster.ca}

\maketitle

\begin{abstract}
	In this paper we classify  M\"{o}bius invariant differential operators of second order in two dimensional Euclidean space, and establish a Liouville type theorem for general M\"{o}bius invariant elliptic equations.
\end{abstract}

\section{Introduction}

For $n\geq3$, consider the equation
\begin{align}\label{highdeqn1}
-\Delta u=n(n-2)u^{\frac{n+2}{n-2}}\text{ on }\mathbb{R}^n.
\end{align}
The Liouville type theorem of 
Caffarelli, Gidas and Spruck \cite{CGS} asserts that 
positive $C^2$ solutions of (\ref{highdeqn1}) are of the form
\begin{align*}
u(x)=(\frac{a}{1+a^2|x-\bar{x}|^2})^{\frac{n-2}{2}},
\end{align*}
where $a>0$ and $\bar{x}\in\mathbb{R}^n$.
Under an additional hypothesis
$u(x)=O(|x|^{2-n})$ 
for large $|x|$,  the result was established earlier 
by Obata \cite{Obata}  and Gidas, Ni and Nirenberg \cite{GNN}.

Geometrically, equation (\ref{highdeqn1}) means that the scalar curvature of the Riemannian metric $u^{ \frac {4}{n-2} } dx^2$ is equal to $4n(n-1)$.
An analogous equation in dimension two is
\begin{align}\label{2deqn1}
-\Delta u=e^u\text{ on }\mathbb{R}^2.
\end{align}
Geometrically, it means that the Gaussian curvature of $e^u dx^2$ is equal to $1/2$.
The above equation has plenty of solutions according to a
classical  theorem of Liouville \cite{Liou1}:
Let $\Omega$ be a simply connected domain in $\mathbb{R}^2$, then
all $C^2$ real solutions $u$ of  $-\Delta u=e^u$ in $\Omega$
are of the form
\begin{align} \label{rep}
u(x_1,x_2)=\ln \frac{8|f'(z)|^2}{(1+|f(z)|^2)^2},
\end{align}
where $z=x_1+\sqrt{-1}x_2$, and $f(z)$ is a locally univalent meromorphic function, i.e. a meromorphic function in $\Omega$ which has zeros or poles of order at most $1$. In particular, let $f(z)$ be any holomorphic function satisfying $f'(z)\ne 0$ in $\mathbb{C}$, the $u$ given by the  representation formula (\ref{rep}) is a solution of (\ref{2deqn1}). For instance, if we take $f(z)=e^z$, then we obtain a solution $u(x_1,x_2)=\log(8e^{2x_1}(1+e^{2x_1})^{-2})$.

On the other hand, Chen and Li proved in \cite{CL} that $C^2$ solutions of (\ref{2deqn1}) satisfying
\begin{align}\label{integrability}
\int_{\mathbb{R}^2} e^u\ dx<+\infty
\end{align}
are of the form
\begin{align*}
u(x)=2\ln \frac{8a}{8a^2+|x-x_0|^2}\ \ \ \mbox{in}\ 
 \mathbb{R}^2,
\end{align*}
where $a>0$ and $x_0\in \mathbb{R}^2$.

\medskip

Equation  (\ref{2deqn1}) is conformally invariant.
For  a $C^2$ function $u$, let
\begin{align}
u_\psi:=u\circ\psi+\ln |J_\psi|,
\end{align}
where $\psi(z)$  is a holomorphic function with nonzero Jacobian determinant $|J_\psi|$. 
Then we have
\begin{align*}
-e^{-u_\psi}\Delta u_\psi =(-e^{-u}\Delta u)\circ\psi\text{ on }\mathbb{R}^2.
\end{align*}
Here we consider the holomorphic function $\psi$ as a map from $\mathbb{R}^2$ to $\mathbb{R}^2$.

In particular, if $u$ is a solution of (\ref{2deqn1}), then $u_\psi$ is also a solution in the corresponding domain.
In fact, equation (\ref{2deqn1}) is in a sense the only conformally invariant equation as explained below.

Let 
$\mathcal{S}^{2\times 2}$ denote the set of $2\times 2$ real symmetric 
matrices.

\begin{defi}\label{conformalR2}
	Let $H$ be a function from $\mathbb{R}^2\times\mathbb{R}\times\mathbb{R}^2\times\mathcal{S}^{2\times 2}$ to $\mathbb{R}$, we say that a second order differential operator $H(\cdot, u, \nabla u, \nabla^2 u)$ is conformally invariant if for any meromorphic function $\psi$ on $\mathbb{C}$, and any function $u\in C^2(\mathbb{R}^2)$, it holds that:
   \begin{equation}\label{1}
   H(\cdot,u_{\psi},\nabla u_{\psi},\nabla^2 u_{\psi})\equiv H(\cdot,u,\nabla u,\nabla^2 u)\circ\psi.
   \end{equation}
\end{defi}

Note that (\ref{1}) is understood to hold at any point $z\in \mathbb{C}$ which is not a pole of $\psi$ or zero of $\psi'$.

The following proposition classifies all
conformally invariant  second order differentiable operators.

\begin{prop}\label{conformalinv}
$H(\cdot,u,\nabla u,\nabla^2 u)$ is conformally invariant in $\mathbb{R}^2$ if and only if it is of the form: 
\begin{align*}
H(\cdot,u,\nabla u,\nabla^2 u)=g(-e^{-u}\Delta u)
\end{align*}
where $g$ is a function from $\mathbb{R}$ to $\mathbb{R}$.
\end{prop}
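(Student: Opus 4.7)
The forward direction is immediate from the identity $-e^{-u_\psi}\Delta u_\psi=(-e^{-u}\Delta u)\circ\psi$ recalled just before Definition \ref{conformalR2}: if $H=g(-e^{-u}\Delta u)$, then identity (\ref{1}) holds tautologically.

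For the converse, my plan is to show that, for any prescribed 2-jet data $(z_0,u_0,p_0,M_0)\in\mathbb{R}^2\times\mathbb{R}\times\mathbb{R}^2\times\mathcal{S}^{2\times 2}$, one can exhibit a single polynomial $\psi$ (hence a meromorphic function) with $\psi(0)=z_0$ whose pullback normalizes the 2-jet of $u$ at $z_0$ into a canonical 2-jet at $0$ depending only on the scalar $-e^{-u_0}\mathrm{tr}(M_0)$. Invariance of $H$ under this one $\psi$ then forces $H(z_0,u_0,p_0,M_0)$ to depend on its arguments only through $-e^{-u_0}\mathrm{tr}(M_0)=-e^{-u_0}\Delta u(z_0)$, which is the desired representation.

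Concretely I would take $\psi(z)=z_0+a_1 z+\tfrac{1}{2}a_2 z^2+\tfrac{1}{6}a_3 z^3$ with $a_1\in\mathbb{C}\setminus\{0\}$ and $a_2,a_3\in\mathbb{C}$. Working with the complex derivatives $\partial_z=\tfrac12(\partial_1-i\partial_2)$ and $\partial_{\bar z}=\tfrac12(\partial_1+i\partial_2)$, and using that $\psi$ is holomorphic (so $\partial_z\ln|\psi'|^2=\psi''/\psi'$ and $\partial_{\bar z}\psi=0$), the chain rule yields
\begin{align*}
u_\psi(0) &= u_0+2\ln|a_1|,\\
\partial_z u_\psi(0) &= a_1\,\partial_z u(z_0)+a_2/a_1,\\
\partial_z^2 u_\psi(0) &= a_1^2\,\partial_z^2 u(z_0)+a_2\,\partial_z u(z_0)+(a_3 a_1-a_2^2)/a_1^2,\\
\partial_z\partial_{\bar z} u_\psi(0) &= |a_1|^2\,\partial_z\partial_{\bar z} u(z_0).
\end{align*}
The first three equations form a triangular system: $|a_1|=e^{-u_0/2}$ normalizes $u_\psi(0)=0$; with $a_1$ fixed, $a_2$ can then be chosen to normalize $\partial_z u_\psi(0)=0$; with $a_1,a_2$ fixed, $a_3$ can be chosen to normalize $\partial_z^2 u_\psi(0)=0$. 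Since $u_{zz}=\tfrac14(u_{11}-u_{22}-2i u_{12})$, the vanishing of $\partial_z^2 u_\psi(0)$ is equivalent to $\nabla^2 u_\psi(0)=t I$ for some $t\in\mathbb{R}$, and the last displayed formula pins down $t=\tfrac12 e^{-u_0}\Delta u(z_0)$.

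Combining, (\ref{1}) applied to this $\psi$ gives $H(z_0,u_0,p_0,M_0)=H(0,0,0,tI)$ with $t$ as above, so the left-hand side depends on the input only through $-2t=-e^{-u_0}\Delta u(z_0)$; defining $g(s):=H(0,0,0,-\tfrac{s}{2}I)$ completes the proof. The main technical step is the four jet formulas above; once they are established, triangularity of the normalization system and the fact that polynomials are automatically meromorphic make the argument mechanical. The main subtlety I anticipate is simply maintaining the correct ordering of the parameters $a_1,a_2,a_3$ so that each equation in turn reduces to a single scalar (respectively complex) equation in a new variable, rather than a coupled nonlinear system.
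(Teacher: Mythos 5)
Your argument is correct, and it takes a genuinely different route from the paper. The paper proves Proposition~\ref{conformalinv} by first invoking Proposition~\ref{Mobiusinv} (Möbius invariance, using translations, rotations, dilations, and inversion) to reduce $H$ to the form $F(A^u)$ with $F$ invariant under orthogonal conjugation, and then exhibits a single non-Möbius holomorphic map $\psi(z)=iz^2$ together with the test function $u(x,y)=ax^2$ to force $F\bigl(\mathrm{diag}(\lambda_1,\lambda_2)\bigr)=F\bigl(\mathrm{diag}(\lambda_1+\lambda_2,0)\bigr)$, i.e.\ dependence only on the trace. Your proof skips the Möbius step entirely: a cubic polynomial $\psi$ provides four free complex/real parameters which, because the jet equations are triangular ($a_2,a_3$ absent from $u_\psi(0)$; $a_3$ absent from $\partial_z u_\psi(0)$), suffice to normalize the full $2$-jet of $u_\psi$ at $0$ to $\bigl(0,0,tI\bigr)$ with $t=\tfrac12 e^{-u_0}\Delta u(z_0)$; invariance under that one $\psi$ then immediately yields $H(z_0,u_0,p_0,M_0)=H(0,0,0,tI)$. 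Your jet formulas are right (recall $\partial_z\partial_{\bar z}=\tfrac14\Delta$, and the vanishing of $\partial_z^2 u_\psi(0)$ together with its conjugate is exactly $\nabla^2 u_\psi(0)\in\mathbb{R}I$), and since $t$ depends only on $|a_1|$ the unused phase of $a_1$ is harmless. The trade-off: the paper's route is modular, reusing Proposition~\ref{Mobiusinv} and keeping the conformal-specific computation to one short example, whereas yours is self-contained and more elementary, requiring neither the tensorial structure of $A^u$ nor inversions, at the cost of carrying out the chain-rule calculus for all four jet components. Your argument also makes transparent why, in two dimensions, conformal invariance is strictly stronger than Möbius invariance: the extra coefficients $a_2,a_3$ available in a general holomorphic map are precisely what let one kill $\nabla u_\psi(0)$ and the trace-free part of $\nabla^2 u_\psi(0)$ simultaneously.
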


\medskip

In this paper, we study a larger class of invariant operators,
namely those invariant under M\"{o}bius transformations.

 Recall that a M\"{o}bius transformation is a transformation generated by a finite composition of
\begin{align*}
\phi(x)=x+x_0,\ \phi(x)=\lambda x,\ \phi(x)=O x,\text{ and}\ \phi(x)=\frac{x}{|x|^2}\text{ in }\mathbb{R}^2
\end{align*}
where $\lambda$ is a nonzero constant, and $O$ is an orthogonal matrix. 
In complex variables $z=x_1+\sqrt{-1}x_2$, they are given by
$$
\phi(z)= \frac {az+b}{cz+d}\text{ or }\frac {a\bar z+b}{c\bar z+d},\ \ 
\ a,b,c,d \in \mathbb{C} \ \mbox{and}\ ad-bc\ne 0.
$$

\begin{defi}\label{MobiusR2}
        Let $H$ be a function from $\mathbb{R}^2\times\mathbb{R}\times\mathbb{R}^2\times\mathcal{S}^{2\times 2}$ to $\mathbb{R}$, we say $H(\cdot,u,\nabla u,\nabla^2 u)$ is M\"{o}bius invariant if for any M\"{o}bius transformation $\psi$ and any function $u\in C^2(\mathbb{R}^2)$, it holds that:
        \begin{align}\label{Mobiusfor}
        H(\cdot,u_{\psi},\nabla u_{\psi},\nabla^2 u_{\psi})
        =H(\cdot,u,\nabla u,\nabla^2 u)\circ\psi\qquad \text{on}\  \mathbb{R}^2.
        \end{align}
\end{defi}

We prove that all  M\"{o}bius invariant operators can be expressed as $F(A^u)$, 
where
\begin{equation}\label{Au}
A^u:=-e^{-u}\nabla^2u+\frac{1}{2}e^{-u}du\otimes du-\frac{1}{4}e^{-u}|\nabla u|^2I
\end{equation}
is a $2\times 2$ symmetric matrix operator of second order.

Notice that $tr(A^u)=-e^{-u}\Delta u$, conformally invariant operators are, as shown in Proposition \ref{conformalinv}, a function of the trace of $A^u$ in this setting.
 
The operator $A^u$ has the following invariance property:
For any M\"{o}bius transformation $\psi$ and any $x\in\mathbb{R}^2$, $A^{u_{\psi}}(x)=O^T(A^u\circ\psi)O$, where $O=|J_\psi|^{-1/2}J_\psi\in O(2)$, the set of all real orthogonal matrices.

Moreover, for a M\"{o}bius transformation $\psi$, denote $y=\psi(x)$ as the coordinate change, and $v=u_\psi$, then
\begin{align*}
e^u(A^u_{kl}dy_k\otimes dy_l)=e^v(A^v_{ij}dx_i\otimes dx_j).
\end{align*}

Hence $F(A^{u_\psi})\equiv F(A^u\circ\psi)$ for any $F: \mathcal{S}^{2\times 2} \to \mathbb{R}$ which is invariant under orthogonal conjugation.  We say that $F$ is invariant under orthogonal conjugation if
\begin{equation}
F(O^{-1}MO)=F(M) \ \ \forall\ M\in \mathcal{S}^{2\times 2}, \
O\in O(2).
\label{I1}
\end{equation}

For $M\in  \mathcal{S}^{2\times 2}$, 
let $\lambda(M)=(\lambda_1(M), \lambda_2(M))$ with
 $\lambda_1(M)$ and $\lambda_2(M)$ being the eigenvalues of $M$. Then a function $F$ on $\mathcal{S}^{2\times 2}$ satisfying (\ref{I1}) corresponds to a symmetric function $f$ on $\mathbb{R}^2$ 
satisfying $F(M)=f(\lambda(M))$ for all $M\in\mathcal{S}^{2\times 2}$.

We classify all M\"{o}bius invariant operators in the following proposition.

\begin{prop}\label{Mobiusinv}
        Let $H(\cdot,u,\nabla u,\nabla^2 u)$ be M\"{o}bius invariant in $\mathbb{R}^2$, then $H$ is of the form:
        \begin{align*}
        H(\cdot,u,\nabla u,\nabla^2 u)=F(A^u)
        \end{align*}
        where $F:\mathcal{S}^{2\times 2}\rightarrow\mathbb{R}$ is
 invariant under orthogonal conjugation,  i.e. $F$ satisfies (\ref{I1}).
\end{prop}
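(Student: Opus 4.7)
The plan is to use the generators of the Möbius group to reduce an arbitrary 2-jet $(u(x_0), \nabla u(x_0), \nabla^2 u(x_0))$ to the canonical jet $(0, 0, -A^u(x_0))$ at the origin, from which the structure $H = F(A^u)$ can be read off directly.

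First, applying \eqref{Mobiusfor} with a translation $\psi(y) = y + x_0$ (so $J_\psi \equiv I$) removes all explicit base-point dependence; write $H = H(u, p, M)$. Next, apply the dilation $\psi(y) = \lambda y$ at $y = 0$: one has $u_\psi(0) = u(0) + 2\ln\lambda$, $\nabla u_\psi(0) = \lambda p$, $\nabla^2 u_\psi(0) = \lambda^2 M$, and the choice $\lambda = e^{-u(0)/2} > 0$ normalizes the value $u(0)$ to zero. Then apply the special conformal transformation $\phi_b(z) = z/(1 - bz)$ for $b \in \mathbb{C}$, a Möbius map fixing $0$ with $\phi_b'(0) = 1$. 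A first-order Taylor expansion of $\phi_b$ and of $\ln|\det J_{\phi_b}|(z) = -4\ln|1 - bz|$ at $z = 0$ gives $\nabla u_{\phi_b}(0) = \nabla u(0) + v(b)$ with $v(b) = (4b_1, -4b_2)$, an $\mathbb{R}$-linear isomorphism $\mathbb{C} \to \mathbb{R}^2$; choosing $b$ so that $v(b) = -\nabla u(0)$ normalizes the gradient to zero.

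Let $\psi$ denote the composed Möbius map; then $\psi(0) = x_0$, $J_\psi(0) = \lambda I$, and the transformed jet at $0$ has the form $(0, 0, \widetilde{M})$. Since $O := |J_\psi|^{-1/2} J_\psi = I$ at $0$, the covariance $A^{u_\psi}(0) = O^T A^u(\psi(0)) O = A^u(x_0)$ combined with the direct computation $A^{u_\psi}(0) = -\widetilde{M}$ from \eqref{Au} (using $u_\psi(0) = \nabla u_\psi(0) = 0$) forces $\widetilde{M} = -A^u(x_0)$. Möbius invariance at $0$ then yields $H(u(x_0), \nabla u(x_0), \nabla^2 u(x_0)) = H(0, 0, -A^u(x_0)) = F(A^u(x_0))$ with $F(N) := H(0, 0, -N)$. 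Finally, invariance of $F$ under orthogonal conjugation (\ref{I1}) follows by applying \eqref{Mobiusfor} with $\psi = O \in O(2)$ at the origin, which sends $(u, p, M)$ to $(u, O^T p, O^T M O)$ and, specialized to $u = 0$ and $p = 0$, gives $F(O^T N O) = F(N)$. The main technical step is the SCT expansion: verifying that $v$ is an isomorphism is a $2\times 2$ calculation, and the identification $\widetilde{M} = -A^u$ would otherwise require a cumbersome second-order Hessian computation, but the covariance of $A^u$ stated just before the proposition reduces this to a one-line check, so that structural fact is where the proof becomes clean.
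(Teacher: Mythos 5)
Your proof is correct and follows the same overall plan as the paper (use the generators of the M\"obius group to normalize the $2$-jet and then read off the structure $H=F(A^u)$ from the covariance of $A^u$), but the normalizing map you use to kill the gradient is genuinely different. The paper constructs $u$ with the prescribed $2$-jet at the specific point $\alpha=-4p/|p|^2$ and applies the \emph{anti-holomorphic} scaled inversion $\psi(z)=16z/(|p|^2|z|^2)$, which is an involution fixing $\alpha$; because $J_\psi(\alpha)$ is then the reflection $I-2\hat p\hat p^T$, the paper must invoke the previously-derived orthogonal invariance (\ref{ortho}) to replace $A^{u_\psi}(\alpha)$ by $A^u(\alpha)$. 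You instead translate the jet to the origin and apply the \emph{holomorphic} special conformal map $\phi_b(z)=z/(1-bz)$, which fixes $0$ with $\phi_b'(0)=1$; consequently $|J_\psi|^{-1/2}J_\psi=I$ at the origin and the covariance gives $A^{u_\psi}(0)=A^u(x_0)$ directly, without passing through the orthogonal-conjugation identity. That makes your reduction slightly cleaner, relegating the orthogonal conjugation to the separate final step establishing (\ref{I1}), where it belongs. One small inaccuracy in the writeup: if the dilation $D_\lambda$ is applied before $\phi_b$ (as your ordering suggests), the gradient entering the SCT step is $\lambda\nabla u(0)$ rather than $\nabla u(0)$, so the correct choice is $v(b)=-\lambda\nabla u(0)$; this is harmless since $v$ is a linear isomorphism, but worth stating precisely, or else switch the order to $\phi_b$ then $D_\lambda$ so that $v(b)=-\nabla u(0)$ is literally correct.
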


In dimension $n\geq 3$, a classical theorem of Liouville states that any local conformal diffeomorphism in $\mathbb{R}^n$ is the restriction of a M\"{o}bius transformation. Therefore, unlike Definition \ref{conformalR2} and \ref{MobiusR2} for operators in $\mathbb{R}^2$, conformally invariant operators and M\"{o}bius invariant operators in $\mathbb{R}^n$ are the same for $n\geq 3$. The classification of conformally invariant  operators of second order was given by Li and Li in \cite{LL1}. Our proof of Proposition \ref{Mobiusinv} follows the arguments there.

\medskip

The main result in this paper is a Liouville type theorem for M\"{o}bius invariant elliptic equations $F(A^u)=1$ in $\mathbb{R}^2$.

From now on, let 
\begin{align}\label{gamma2}
\Gamma\text{ be an open convex symmetric}\text{ cone in }\mathbb{R}^2\text{ with vertex at the origin,}
\end{align}
and
\begin{align}\label{gamma}
\Gamma_2\subset \Gamma\subset\Gamma_1,
\end{align}
where $\Gamma_1:=\{(\lambda_1,\lambda_2):\lambda_1+\lambda_2>0\}$ 
and $\Gamma_2:=\{(\lambda_1,\lambda_2):\lambda_1>0, \lambda_2>0\}$. 
Here, $\Gamma$ being symmetric means that $(\lambda_1, \lambda_2)\in \Gamma$ 
implies $(\lambda_2, \lambda_1)\in \Gamma$.
Also, a function $f$ defined on $\Gamma$ is said to be symmetric if 
$f(\lambda_1, \lambda_2)\equiv f(\lambda_2, \lambda_1)$.

It is not difficult to see that $\Gamma$ satisfies  (\ref{gamma2}) and (\ref{gamma})  if 
and only if $\Gamma=\Gamma_p$ for some $1\le p\le 2$ where
\begin{equation*}
\Gamma_p= \{ \lambda=(\lambda_1, \lambda_2)\ :\
\lambda_2>(p-2)\lambda_1,\ 
\lambda_1>(p-2)\lambda_2\}.
\label{equiv}
\end{equation*}
Note that the above definition of $\Gamma_1$ and $\Gamma_2$ is consistent with 
earlier definitions.

\begin{theo}\label{nondege}
        Let $\Gamma=\Gamma_p$ for some $1<p\leq 2$,
and let $f\in C^1(\Gamma)$ be symmetric and satisfy $\partial_{\lambda_i}f> 0$ in $\Gamma$,  $i=1,2$. 
Assume that $u$ is a $C^2$ solution of
        \begin{align*}
        f\left(\lambda (A^u)\right)=1,\quad in \quad \mathbb{R}^2,
        \end{align*}
        where $\lambda(A^u)\in \Gamma$ are the eigenvalues of $A^u$.
        Then, for some $x_0\in \mathbb{R}^2$ and some constants $a,b>0$ satisfying $(2a^{-2}b,2a^{-2}b)\in \Gamma$ and $f(2a^{-2}b,2a^{-2}b)=1$,
        \begin{align}\label{solution}
        u(x)\equiv 2\ln\frac{8a}{8|x-x_0|^2+b}.
        \end{align}
\end{theo}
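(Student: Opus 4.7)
The plan is to apply the moving sphere method, as developed by Li and Li \cite{LL1} and others for M\"obius invariant equations, in order to classify the solutions. Given $x_0\in\mathbb{R}^2$ and $\lambda>0$, let $\psi_{x_0,\lambda}(x)=x_0+\lambda^2(x-x_0)/|x-x_0|^2$ denote the Kelvin inversion about the sphere of radius $\lambda$ centered at $x_0$, and set
\[
u_{x_0,\lambda}(x):=u(\psi_{x_0,\lambda}(x))+\ln|J_{\psi_{x_0,\lambda}}(x)|.
\]
Since $\psi_{x_0,\lambda}$ is a M\"obius transformation, Proposition \ref{Mobiusinv} combined with the covariance $A^{u_\psi}=O^{T}(A^u\circ\psi)O$ noted in the introduction implies that $u_{x_0,\lambda}$ satisfies the same equation $f(\lambda(A^{u_{x_0,\lambda}}))=1$ on $\mathbb{R}^2\setminus\{x_0\}$. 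The strategy is then to compare $u$ with its Kelvin transforms $u_{x_0,\lambda}$ while sliding $\lambda$ outward.

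A Taylor expansion of $u\in C^2$ near $x_0$ yields a threshold $\lambda_0(x_0)>0$ such that $u_{x_0,\lambda}(x)\le u(x)$ for every $|x-x_0|\ge\lambda$ and every $0<\lambda<\lambda_0(x_0)$, so we may define the critical radius
\[
\bar\lambda(x_0):=\sup\bigl\{\lambda_1>0:u_{x_0,\lambda}\le u\text{ on }\{|x-x_0|\ge\lambda\},\ \forall\,0<\lambda<\lambda_1\bigr\}.
\]
If $\bar\lambda(x_0)<\infty$, then the strong comparison principle for the fully nonlinear elliptic operator $u\mapsto f(\lambda(A^u))$---ellipticity being supplied by the hypothesis $\partial_{\lambda_i}f>0$ on $\Gamma$---applied to the nonnegative difference $u-u_{x_0,\bar\lambda(x_0)}$, together with a Hopf-type boundary argument on the critical sphere, forces $u_{x_0,\bar\lambda(x_0)}\equiv u$ on $\mathbb{R}^2\setminus\{x_0\}$; otherwise the moving sphere could be pushed past $\bar\lambda(x_0)$, contradicting its definition.

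The main obstacle is to exclude the alternative $\bar\lambda(x_0)=+\infty$. A direct substitution shows that this possibility is equivalent to the quantity $v(y):=u(y)+2\ln|y-x_0|$ being non-decreasing along every ray emanating from $x_0$ (in the sense of the Kelvin inversion). Since $\Gamma\subset\Gamma_1$ makes the trace of $A^u$ strictly positive, so that $u$ is superharmonic on $\mathbb{R}^2$, and since the assumption $p>1$ yields uniform positivity of both eigenvalues of $A^u$, combining this Kelvin monotonicity of $v$ with the equation produces an asymptotic lower bound for $u$ at infinity that is incompatible with $u\in C^2(\mathbb{R}^2)$. Hence $\bar\lambda(x_0)<\infty$ for every $x_0\in\mathbb{R}^2$, and the previous paragraph gives $u_{x_0,\bar\lambda(x_0)}\equiv u$ on $\mathbb{R}^2\setminus\{x_0\}$. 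A calculus lemma in the spirit of the moving sphere literature then forces $u(x)=2\ln\frac{8a}{8|x-x_0|^2+b}$ for some $x_0\in\mathbb{R}^2$ and $a,b>0$. A direct computation shows that for such $u$, $A^u$ is a positive multiple of the identity, yielding $\lambda(A^u)\in\Gamma$ and $f(\lambda(A^u))=1$ as required.
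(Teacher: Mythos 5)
Your overall skeleton (moving spheres, critical radius $\bar\lambda(x)$, calculus lemma to pin down the form of $u$) matches the paper, but there are several concrete gaps, and the most serious one is at the very start. To get the moving sphere going you must show $u_{x,\lambda}(y)\le u(y)$ for all $|y-x|\ge\lambda$, including $|y|$ large; a Taylor expansion of $u$ near $x$ only controls the region $|y-x|$ close to $\lambda$. For large $|y|$ you need an asymptotic lower bound of the form $\liminf_{|y|\to\infty}\bigl(u(y)+4\ln|y|\bigr)>-\infty$. Establishing this bound (Proposition \ref{Asymptotic behavior-1}) is the main technical novelty of the paper: it is proved by radializing, taking the $\epsilon$-lower envelope to regularize the viscosity supersolution, and a delicate ODE/monotonicity argument that uses the cone condition $\Gamma=\Gamma_p$, $p>1$, in an essential way. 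Without some version of this estimate the moving-sphere argument cannot start.

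Two further points do not hold as stated. First, the assertion that ``$p>1$ yields uniform positivity of both eigenvalues of $A^u$'' is false: for $1<p<2$ we have $\Gamma_2\subsetneqq\Gamma_p\subsetneqq\Gamma_1$, so one eigenvalue of $A^u$ may be negative; only $p=2$ forces both eigenvalues positive. Consequently the sketched contradiction for the case $\bar\lambda(x_0)=+\infty$ does not go through: $u(y)+2\ln|y-x_0|$ nondecreasing along rays gives only $u(y)\ge C-2\ln|y-x_0|$, which is perfectly compatible with a $C^2$ superharmonic $u$. The paper instead splits on $\alpha=\liminf_{|x|\to\infty}(u+4\ln|x|)$: if $\alpha=+\infty$ then $\bar\lambda(x)=+\infty$ for \emph{every} $x$, and Lemma \ref{Calculus lemma} forces $u$ constant, contradicting $\lambda(A^u)\in\Gamma$; if $\alpha<\infty$ then $\bar\lambda(x)<\infty$ for every $x$. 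Second, the step ``strong comparison + Hopf forces $u_{x_0,\bar\lambda}\equiv u$'' is incomplete: $u_{x_0,\bar\lambda}$ is singular at $x_0$, and to push the sphere past $\bar\lambda$ you must also control $u-u_{x_0,\bar\lambda}$ near infinity. The paper handles this with Theorem A (from \cite{CLN}), which gives $\liminf_{y\to 0}(u_{x_0,\bar\lambda}-u)>0$ at the isolated singularity; after inversion this provides the uniform positive gap near infinity needed to combine with the Hopf/compactness argument on $\partial B_{\bar\lambda}$. Strong comparison and Hopf alone only control a compact annulus.
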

\begin{rema}
	Theorem \ref{nondege} still holds when replacing $f\in C^1(\Gamma)$ and $\partial_{\lambda_i}f> 0$ in $\Gamma$,  $i=1,2$ by: For any compact subset $K\subset\Gamma$, there exists constant $C(K)>1$ such that
	\begin{align*}
	\frac{1}{C(K)}\|\mu\|\leq f(\lambda+\mu)-f(\lambda)\leq C(K)\|\mu\|,\quad\forall\ \lambda,\lambda+\mu\in K,\mu_1,\mu_2>0,
	\end{align*}
\end{rema}
\begin{rema}
	For $u$ given in (\ref{solution}), $A^u=2a^{-2}bI$, where $I$ is the $2\times 2$ identity matrix.
\end{rema}
In the above theorem, no additional assumption on $u$ near infinity is made. In particular, the integrability condition (\ref{integrability}) is not assumed. The hypothesis $\partial_{\lambda_i}f>0$ means that the equation $f\left(\lambda (A^u)\right)=1$ is elliptic. For $\Gamma=\Gamma_1$ and $f(\lambda_1,\lambda_2)=\lambda_1+\lambda_2$, the equation is $-\Delta u=e^u$. As mentioned earlier, solutions were classified in \cite{CL} under the assumption (\ref{integrability}). In this case, the conclusion does not hold without the assumption. When $f(\lambda_1,\lambda_2)=\sigma_2(\lambda):=\lambda_1\lambda_2$, the equation becomes $\det(A^u)=1$.

A Liouville type theorem of the form $\sigma_2(\lambda(A^u))=1$ in $\mathbb{R}^4$ was proved by Chang, Gursky and Yang in \cite{CGY2}, where $\sigma_2$ denotes the second elementary symmetric function in $\mathbb{R}^4$. For $n\geq 3$, a Liouville type theorem for general elliptic equation $f(\lambda(A^u))=1$ in $\mathbb{R}^n$ was proved by Li and Li in \cite{LL1} and \cite{LL2}. The latter is an extension of the above mentioned Liouville type theorem for equation (\ref{highdeqn1}). The crucial point in the proof of Theorem \ref{nondege} is to handle the possible singularity of the solution at infinity. Our proof requires new ingredients, which enable us to establish appropriate asymptotic behavior of the solution near infinity.

\medskip

Chow and Wan  \cite[Corollary 2]{ChouW} gave a complex analysis proof of the above mentioned Liouville type theorem for equation (\ref{2deqn1}) by using Liouville's  representation formula (\ref{rep}).

\begin{ques}
	Is there a complex analysis proof of Theorem \ref{nondege}?
\end{ques}

In connection with the above question, we bring attention to a simple complex analysis proof of the following J\"orgen's theorem by Nitsche \cite{Nitsche}: Any smooth solution of $\det (D^2u)=1$ in $\mathbb{R}^2$ must be a quadratic polynomial.

We write $A^u_{ij}dx_i\otimes dx_j$ in complex variables. Denote $z=x_1+\sqrt{-1} x_2$ and 
$\bar z= x_1-\sqrt{-1} x_2$.
It 
is straightforward to check that
\begin{align*}
	A^u_{ij}dx_i\otimes dx_j=B^u_{zz}dz\otimes dz+B^u_{z\bar z}dz\otimes d\bar z+B^u_{\bar z z}d\bar z\otimes dz+B^u_{\bar z\bar z}d\bar z\otimes d\bar z,
\end{align*}

where
\begin{align*}
B^u=\begin{pmatrix} B^u_{z\bar z} & B^u_{zz} \\ B^u_{\bar z\bar z} & B^u_{\bar z z} \end{pmatrix}
=e^{-u}\begin{pmatrix} -u_{z\bar z} & -u_{zz}+\frac{1}{2}u_z^2 \\ -u_{\bar z\bar z}+\frac{1}{2}u_{\bar z}^2 & -u_{z\bar z} \end{pmatrix}.
\end{align*}

If $u$ is a real function, then $-u_{zz}+\frac{1}{2}u_z^2$ is the complex conjugate of $-u_{\bar z\bar z}+\frac{1}{2}u_{\bar z}^2$, and $B^u$ is a Hermitian matrix.

We have the following ways to describe the M\"{o}bius invariance in complex variables.

For any M\"{o}bius transformation $\psi$ and any $z\in\mathbb{C}$, $B^{u_{\psi}}(z)=U^*(B^u\circ\psi)U$, where
\begin{align*}
U=\frac{1}{\psi'(z)}\begin{pmatrix} \partial\psi/\partial z &\partial\psi/\partial\bar z \\  \partial\bar\psi/\partial z  & \partial\bar\psi/\partial\bar z \end{pmatrix}\in U(2),
\end{align*}
the set of all unitary matrices.

Moreover, for a M\"{o}bius transformation $\psi$, denote $z=\psi(w)$ as the coordinate change, and $v=u_\psi$. Then
\begin{align*}
&e^u(B^u_{zz}dz\otimes dz+B^u_{z\bar z}dz\otimes d\bar z+B^u_{\bar z z}d\bar z\otimes dz+B^u_{\bar z\bar z}d\bar z\otimes d\bar z)\\
=&e^v(B^v_{ww}dw\otimes dw+B^v_{w\bar w}dw\otimes d\bar w+B^v_{\bar w w}d\bar w\otimes dw+B^v_{\bar w\bar w}d\bar w\otimes d\bar w).
\end{align*}

\medskip


The second order differential matrix operator
$A^u$ in (\ref{Au}) corresponds to a  $(0,2)$-tensor on the standard Euclidean $2$-sphere $(\mathbb{S}^2, g_0)$ in $\mathbb{R}^3$. 
For a $C^2$ function $u$ on $\mathbb{S}^2$,
we associate with the conformal metric $g_u:=e^u g_0$ a $(0,2)$-tensor
\begin{equation*}
A_{g_u} := -\nabla_{g_0}^2u+\frac{1}{2}du\otimes du-
\frac{1}{4}|\nabla_{g_0} u|^2g_0+K_{g_0} g_0,
\label{A1} 
\end{equation*}
where $K_{g_0}\equiv 1 $ is the Gaussian curvature of the metric $g_0$.

Theorem \ref{nondege} is 
the starting point of our proof of the following results in a subsequent paper \cite{LLL} on the existence and compactness of solutions  to the  $\sigma_2$-Nirenberg problem.

For $K$ satisfying the nondegeneracy condition
\begin{align}\label{Knondege}
|\nabla K|_{g_0}+|\Delta K|_{g_0}>0 \text{ on }\mathbb{S}^2,
\end{align}
we define the sets
\begin{align*}
\text{Crit}_{+}(K)=\{x\in\mathbb{S}^2|\nabla_{g_0}K(x)=0,\Delta_{g_0}K(x)>0\},\\
\text{Crit}_{-}(K)=\{x\in\mathbb{S}^2|\nabla_{g_0}K(x)=0,\Delta_{g_0}K(x)<0\}.
\end{align*}
Set $\deg(\nabla K, \text{Crit}_{-}(K)):=\deg(\nabla K,O,0)$, where $O$ is any open subset of $\mathbb{S}^2$ containing $\text{Crit}_{-}(K)$ and disjoint from $\text{Crit}_{+}(K)$. By (\ref{Knondege}), this is well-defined.

\begin{theo}\label{Niren} (\cite{LLL})
        Assume that $K\in C^2(\mathbb{S}^2)$ is a positive function satisfying the nondegeneracy condition (\ref{Knondege}). Then there exists a positive constant $C$ depending only on $K$, such that
        \begin{align*}
        \|u\|_{C^2(\mathbb{S}^2)}\leq C,
        \end{align*}
        for all $C^2$ solutions $u$ of the equation
        \begin{align}\label{eqn}
        \sigma_2(\lambda(g_u^{-1}A_{g_u}))=K(x),\quad \lambda(A_{g_u})\in\Gamma_2\quad\text{on }\mathbb{S}^2.
        \end{align}
        Moreover, if $\deg(\nabla K, \text{Crit}_{-}(K))\neq 1$, then (\ref{eqn}) admits a solution.
\end{theo}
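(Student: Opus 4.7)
The plan has two parts: first an \emph{a priori} $C^2$ estimate via blow-up analysis, and then a Leray--Schauder degree computation for existence. For the estimate, suppose by contradiction that a sequence $u_k$ of solutions of (\ref{eqn}) satisfies $M_k := \max_{\mathbb{S}^2} u_k \to +\infty$. Choose maximum points $x_k \to x_0$ and set the conformal scale $\epsilon_k = e^{-M_k/2}$. In a stereographic chart centered at $x_k$, introduce the rescaled sequence
\begin{align*}
v_k(y) := u_k(x_k + \epsilon_k y) + 2\ln \epsilon_k,
\end{align*}
which satisfies $\sigma_2(\lambda(A^{v_k})) = K(x_k + \epsilon_k y)$ on expanding balls, with $v_k(0) = 0$ and $\lambda(A^{v_k}) \in \Gamma_2$. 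Local $C^2$ estimates for $\sigma_2$ in the elliptic cone $\Gamma_2$ (available because $\sigma_2^{1/2}$ is concave) yield a $C^2_{\mathrm{loc}}$-limit $v_\infty$ solving $\sigma_2(\lambda(A^{v_\infty})) = K(x_0)$ on $\mathbb{R}^2$. Theorem \ref{nondege}, applied with $f = \sigma_2 / K(x_0)$, identifies $v_\infty$ with the explicit bubble profile (\ref{solution}), so the blow-up is an isolated simple bubble.

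The next step is to extract an obstruction. The Kazdan--Warner identity on $(\mathbb{S}^2, g_0)$ reads $\int_{\mathbb{S}^2} X(K)\, dV_{g_{u_k}} = 0$ for every conformal Killing field $X$; expanding this integral against the bubble profile forces, to leading order, $\nabla_{g_0} K(x_0) = 0$, i.e.\ $x_0$ is a critical point of $K$. A refined second-order expansion, using a Pohozaev-type identity for the $\sigma_2$-operator in the spirit of the Li--Li analysis in higher dimensions, is expected to yield $\Delta_{g_0} K(x_0) \leq 0$. Combined with (\ref{Knondege}), this forces $x_0 \in \mathrm{Crit}_-(K)$; a sharper matching of the bubble's second-order coefficients with the Taylor expansion of $K$ at $x_0$ then produces a quantitative contradiction, establishing the uniform bound $\|u\|_{C^2(\mathbb{S}^2)} \leq C$.

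For existence, set $\mathcal{F}_K(u) := \sigma_2(\lambda(g_u^{-1} A_{g_u})) - K$ on an open admissible set $\mathcal{O} \subset C^{2,\alpha}(\mathbb{S}^2)$. The uniform estimate, applied along a homotopy $K_t$ deforming $K$ to a reference function for which the degree is computable (e.g.\ a Morse function with the same $\mathrm{Crit}_-$-data, or a small perturbation of a constant), gives compactness and well-definedness of the Leray--Schauder degree $\deg(\mathcal{F}_{K_t}, \mathcal{O}, 0)$. Following the template for $\sigma_k$-Nirenberg problems developed by Chang--Gursky--Yang and Li--Li, a bubble-tree analysis coupled with the classification in Theorem \ref{nondege} reduces the degree to a sum of local contributions indexed by $\mathrm{Crit}_-(K)$, yielding
\begin{align*}
\deg(\mathcal{F}_K, \mathcal{O}, 0) = 1 - \deg(\nabla K, \mathrm{Crit}_-(K)).
\end{align*}
Under the hypothesis, this is nonzero, so a solution exists.

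The main obstacle is the sharp sign condition $\Delta_{g_0} K(x_0) \leq 0$ at a blow-up point and the quantitative second-order matching with the bubble. In the classical ($\sigma_1$) case this follows from a well-known Kazdan--Warner/Pohozaev pair, but the fully nonlinear $\sigma_2$ operator in dimension two requires identifying a divergence structure compatible with both the Möbius group action and the bubble provided by Theorem \ref{nondege}. The Hermitian reformulation via $B^u$ presented at the end of the excerpt is the natural bookkeeping device, and converting to the complex variables $(z, \bar z)$ should make the relevant integration-by-parts identities transparent and the Möbius generators easy to plug in.
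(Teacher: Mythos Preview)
The paper does not contain a proof of Theorem~\ref{Niren}. It is stated with the citation ``(\cite{LLL})'' and introduced as a result ``in a subsequent paper \cite{LLL}''; the present paper only records that Theorem~\ref{nondege} is ``the starting point'' of that proof. So there is no proof here to compare your proposal against.

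That said, your outline is consistent with what the paper signals: blow-up analysis reducing to the Liouville classification of Theorem~\ref{nondege}, followed by a degree argument. Two remarks. First, your degree formula $\deg(\mathcal{F}_K,\mathcal{O},0)=1-\deg(\nabla K,\mathrm{Crit}_-(K))$ has the opposite sign from the one recorded in the Remark following Theorem~\ref{Niren}, namely $-1+\deg(\nabla K,\mathrm{Crit}_-(K))$; this does not affect the existence conclusion but indicates a sign convention to be tracked carefully. Second, you correctly flag the genuine gap yourself: the step ``a refined second-order expansion \ldots\ is expected to yield $\Delta_{g_0}K(x_0)\le 0$'' is the heart of the compactness argument, and you have not supplied the Pohozaev/Kazdan--Warner-type identity for the $\sigma_2$ operator in dimension two that would justify it. Without that identity and the subsequent ``quantitative contradiction,'' the $C^2$ bound is not established, and the degree homotopy cannot be shown to avoid the boundary of $\mathcal{O}$. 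As written, your text is a plausible strategy sketch rather than a proof.
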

\begin{rema}
       If $K\in C^{2,\alpha}(\mathbb{S}^2)$, $0<\alpha<1$, and $\mathcal{O}$ is an open subset of $C^{4,\alpha}(\mathbb{S}^2)$ which contains all solutions of (\ref{eqn}), then
       \begin{align*}
       \deg(\sigma_2(\lambda(A^v))-K,\mathcal{O},0)=-1+\deg(\nabla K, \text{Crit}_{-}(K)).
       \end{align*}
       Here the degree on the left hand side is as defined in \cite{L89}.
\end{rema}
\begin{rema}
	For any $K$ satisfying (\ref{Knondege}) and having only isolated nondegenerate critical points, 
	\begin{align*}
	\deg(\nabla K, \text{Crit}_{-}(K))=\sum_{\bar{x}\in\mathbb{S}^2,\nabla K(\bar{x})=0,\Delta K(\bar{x})<0}(-1)^{i(\bar x)}
	\end{align*}
	where $i(\bar x)$ denotes the number of negative eigenvalues of $\nabla^2 K(\bar x)$.
\end{rema}
Theorem \ref{Niren} is related to the Nirenberg problem, which amounts to solving $\sigma_1(\lambda(A_{g_u}))=K$ on $\mathbb{S}^2$. There has been much work on the Nirenberg problem and related ones, see e.g. Chang and Liu \cite{CLiu}, Chang and Yang \cite{CY}, Han \cite{Han}, Jin, Li and Xiong \cite{JLX}, and the references therein. For $n\geq 3$ and $k\geq 2$, the $\sigma_k$-Nirenberg problem was studied by Chang, Han and Yang \cite{CHY} and Li, Nguyen and Wang\cite{LNW1}.

\medskip

The above mentioned Liouville type theorem for general conformally invariant equations $f(\lambda(A^u))=1$ in \cite{LL2} was stimulated by the study of fully nonlinear elliptic equations involving the Schouten tensor, and in particular by the study of the $\sigma_k$-Yamabe problem.

The existence of solutions of the $\sigma_k$-Yamabe problem has been proved for $k\geq n/2$, $k=2$ or when $(M,g)$ is locally conformally flat, and the compactness of the set of solutions has been proved for $k\geq n/2$ when the manifold is not conformally equivalent to the standard sphere $-$ they were established in \cite{CGY1,GeW,GW2,GV,LL1,LN2,STW}. For more recent works on $\sigma_k$-Yamabe type problems, see for example \cite{AbEs,BCE,BoSheng,Case,CaseW1,CaseW2,FangW,FangW2,GLN,GS,HLL,He,JS1,JS2,JS3,LN3,LN4,LW,Santos,Sui,T2} and references therein. On the other hand, the existence of solutions remains open when $2<k<n/2$, and the compactness of solutions remains open when $2\leq k<n/2$.


One of our motivations in studying the M\"{o}bius invariant equations in dimension two is to gain insights and inspirations into solving the above mentioned open problems on the existence and compactness of the $\sigma_k$-Yamabe problem, for $2\leq k<n/2$.

\medskip

The rest of our paper is organized as follows. In Section \ref{viscosity}, we recall the definition of viscosity solutions for $\lambda(A^u)\in\partial\Gamma$ and a regularization lemma. We then give a proof of the crucial asymptotic behavior of viscosity supersolutions. Theorem \ref{nondege} is proved in Section \ref{proofLiou} using the method of moving spheres, a variant of the method of moving planes. Proposition \ref{conformalinv} and \ref{Mobiusinv} are proved in Section \ref{classi}. Two calculus lemmas are given in Appendix \ref{appendix} for the reader's convenience.

\section*{Acknowledgements}
The first named author's research was partially supported by NSF Grants DMS-1501004, DMS-2000261, and Simons Fellows Award 677077. The second named author's research was partially supported by NSF Grants DMS-1501004, DMS-2000261. The third named author's research was partially supported by NSERC Discovery Grant.

\section{Asymptotic behavior}\label{viscosity}

In this section, we establish an estimate on the asymptotic behavior for viscosity supersolution of $\lambda(A^u)\in\partial\Gamma$ on $\mathbb{R}^2\backslash\{\text{compact set}\}$. This estimate allows the method of moving spheres argument to get started in the proof of Theorem \ref{nondege} in section \ref{proofLiou}.

We start with the definition of viscosity solutions for $\lambda(A^u)\in\partial\Gamma$, see \cite{L09} and \cite{LNW} for details.

Let us first define the set of upper semi continuous and lower semi continuous functions.

For any set $S\subset \mathbb{R}^2$, we use $USC(S)$ to denote the set of functions $u:S\rightarrow \mathbb{R}\cup \{-\infty\}$, $u\neq -\infty$ in $S$, satisfying
\begin{align*}
\limsup_{x\rightarrow x_0}u(x)\leq u(x_0),\quad \forall x_0\in S.
\end{align*}

Similarly, we use $LSC(S)$ to denote the set of functions $u:S\rightarrow \mathbb{R}\cup \{+\infty\}$, $u\neq +\infty$ in $S$, satisfying
\begin{align*}
\liminf_{x\rightarrow x_0}u(x)\geq u(x_0),\quad \forall x_0\in S.
\end{align*}

\begin{defi}
Let $\Gamma$ satisfy (\ref{gamma2}) and (\ref{gamma}) and $\Omega$ be an open subset in $\mathbb{R}^2$, we say $u\in USC(\Omega)$ is a viscosity subsolution of 
\begin{align}\label{viscosity solution}
\lambda(A^u)\in \partial\Gamma,\quad in \quad \Omega
\end{align}
if for any point $x_0\in \Omega$, $\varphi\in C^2(\Omega)$, $(u-\varphi)(x_0)=0$ and $u-\varphi\leq 0$ near $x_0$, we have
\begin{align*}
\lambda(A^\varphi(x_0))\in \mathbb{R}^2\setminus \Gamma.
\end{align*}

Similarly, we say $u\in LSC(\Omega)$ is a viscosity supersolution of (\ref{viscosity solution}), if for any point $x_0\in \Omega$, $\varphi\in C^2(\Omega)$, $(u-\varphi)(x_0)=0$ and $u-\varphi\geq 0$ near $x_0$, we have
\begin{align*}
\lambda(A^\varphi(x_0))\in \bar{\Gamma}.
\end{align*}

We say $u$ is a viscosity solution of (\ref{viscosity solution}), if it is both a subsolution and a supersolution.
\end{defi}

\medskip

Let us recall the well-known regularization of semi-continuous functions which will be used in the paper, see \cite{CC} and \cite{LNW} for details.

\begin{lemm}\label{Jensen Approximation}
	Let $\Omega$ be an open set, $\Omega'\subset\subset\Omega$, and $\Omega''\subset\subset\Omega'$. Let $u\in C(\bar{\Omega})$ be a viscosity supersolution of (\ref{viscosity solution}), we define the $\epsilon$-lower envelope of $u$ by 
	\begin{align*}
	u_\epsilon(x):=\inf_{y\in \bar{\Omega}'}\{u(y)+\frac{1}{\epsilon}|y-x|^2\},\quad \forall x\in \bar{\Omega'}.
	\end{align*}
	
	Then there exists $\epsilon_0=\epsilon_0(\Omega',\Omega'')\ >0$, such that for $0<\epsilon<\epsilon_0$,
	
	(i) $u_\epsilon\rightarrow u$ in $C^0(\Omega'')$, as $ \epsilon\rightarrow 0^+$.
	
	(ii) $u_\epsilon(x)-\frac{1}{\epsilon}|x|^2$ is concave in $\Omega'$. Consequently, $u_\epsilon$ is second order differentiable almost everywhere in $\Omega''$ and $u_\epsilon$ is Lipschitz in $\Omega''$.

	(iii) $u_\epsilon$ is a viscosity supersolution of (\ref{viscosity solution}) in $\Omega''$, and $\lambda(A^{u_\epsilon})\in\bar{\Gamma}$ where $u$ is second order differentiable in $\Omega''$.	
\end{lemm}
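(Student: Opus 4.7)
The plan is to adapt the classical inf-convolution theory to the operator $A^u$; the ingredient particular to this setting is a scaling identity for $A^u$ under additive constants of the underlying function, combined with the cone property of $\Gamma$. The proof splits naturally along the three claims.

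For (i), I would first establish that, for each $x\in\bar\Omega''$, any $y_\epsilon=y_\epsilon(x)\in\bar\Omega'$ realizing the infimum (existing by compactness and continuity) satisfies $|y_\epsilon-x|\le\sqrt{2\epsilon\|u\|_{L^\infty(\bar\Omega')}}$; this is immediate from taking $y=x$ as a competitor in the definition. Choosing $\epsilon_0(\Omega',\Omega'')$ small enough then forces $y_\epsilon\in\Omega'$, and uniform convergence $u_\epsilon\to u$ on $\Omega''$ follows from the pinching $u(y_\epsilon)\le u_\epsilon(x)\le u(x)$ and the uniform continuity of $u$ on $\bar\Omega'$. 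For (ii), observe that
\[
u_\epsilon(x)-\tfrac{1}{\epsilon}|x|^2=\inf_{y\in\bar\Omega'}\Bigl\{u(y)+\tfrac{1}{\epsilon}|y|^2-\tfrac{2}{\epsilon}y\cdot x\Bigr\}
\]
is an infimum of affine functions of $x$, hence concave on $\Omega'$; local Lipschitz continuity of $u_\epsilon$ and a.e.\ pointwise second order differentiability in $\Omega''$ then follow by standard concavity arguments and Alexandrov's theorem.

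The main content is (iii). Given $\varphi\in C^2$ touching $u_\epsilon$ from below at $x_0\in\Omega''$, set $y_\epsilon:=y_\epsilon(x_0)$ and define
\[
\tilde\varphi(y):=\varphi(y+x_0-y_\epsilon)-\tfrac{1}{\epsilon}|y_\epsilon-x_0|^2.
\]
Substituting $x=y+x_0-y_\epsilon$ into $u_\epsilon(x)\le u(y)+\tfrac{1}{\epsilon}|y-x|^2$ and using $\varphi\le u_\epsilon$ near $x_0$ shows $\tilde\varphi\le u$ near $y_\epsilon$, with equality at $y_\epsilon$; the viscosity supersolution property of $u$ then gives $\lambda(A^{\tilde\varphi}(y_\epsilon))\in\bar\Gamma$. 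Now $\nabla\tilde\varphi(y_\epsilon)=\nabla\varphi(x_0)$, $\nabla^2\tilde\varphi(y_\epsilon)=\nabla^2\varphi(x_0)$, and $\tilde\varphi(y_\epsilon)=\varphi(x_0)-\tfrac{1}{\epsilon}|y_\epsilon-x_0|^2$, so
\[
A^{\tilde\varphi}(y_\epsilon)=\exp\!\Bigl(\tfrac{1}{\epsilon}|y_\epsilon-x_0|^2\Bigr)\,A^\varphi(x_0);
\]
since the prefactor is positive and $\Gamma$ is a cone, $\lambda(A^\varphi(x_0))\in\bar\Gamma$, which is the viscosity supersolution property for $u_\epsilon$. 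For the final assertion of (iii), at any point $x$ where $u_\epsilon$ is twice differentiable, the perturbed Taylor polynomial
\[
\varphi_\eta(y):=u_\epsilon(x)+\nabla u_\epsilon(x)\cdot(y-x)+\tfrac{1}{2}(y-x)^T\nabla^2u_\epsilon(x)(y-x)-\eta|y-x|^2
\]
touches $u_\epsilon$ from below near $x$ for every $\eta>0$; applying the just-proved supersolution property gives $\lambda(A^{u_\epsilon}(x)+2\eta e^{-u_\epsilon(x)}I)\in\bar\Gamma$, and passing to the limit $\eta\to 0^+$ closes the argument.

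The argument is largely mechanical; the only step genuinely using the structure of $A^u$ rather than generic inf-convolution machinery is the scaling identity displayed above, which works precisely because $\Gamma$ is an open cone. The main bookkeeping obstacle I anticipate is ensuring $y_\epsilon$ stays strictly inside $\Omega'$ so that $\tilde\varphi$ is a legitimate test function for $u$ on a neighborhood contained in $\Omega$; this quantitative requirement is exactly what determines the threshold $\epsilon_0(\Omega',\Omega'')$.
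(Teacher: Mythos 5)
Your proposal is correct and follows essentially the same route as the paper: pinch $|y_\epsilon-x|$ to keep $y_\epsilon$ interior and pass to the limit for (i), exhibit semi-concavity (your "infimum of affine functions" observation is a slightly cleaner way to see it than the paper's second-difference argument) and invoke Alexandrov for (ii), and for (iii) transport a test function from $u_\epsilon$ at $x_0$ to $u$ at $y_\epsilon$ by translation and an additive shift. The one point you make explicit that the paper leaves implicit is that the additive shift changes $A^\varphi$ by the positive multiplicative factor $e^{\frac{1}{\epsilon}|y_\epsilon-x_0|^2}$, so the cone structure of $\Gamma$ is exactly what makes the argument close, and your $\eta$-perturbation of the Taylor polynomial cleanly supplies the pointwise assertion $\lambda(A^{u_\epsilon})\in\bar\Gamma$ a.e.\ that the paper states but does not spell out.
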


\begin{proof}
	
	We will prove it for $\Omega=B_{2R}$, $\Omega'=B_R$, $\Omega''=B_{R/2}$; the general case can be proved in the same way. Let $x_0$, $x_1\in \bar{B}_R$. Obviously there exists $x_0^{*}$, such that $\displaystyle u_\epsilon(x_0)=u(x_0^{*})+\frac{1}{\epsilon}|x_0^{*}-x_0|^2$, moreover, $u_\epsilon(x_0)\leq u(x_0)$.
	So we obtain:
	\begin{align*}
	\frac{1}{\epsilon}|x_0^{*}-x_0|^2=u_\epsilon(x_0)-u(x_0^{*})\leq u(x_0)-u(x_0^{*}),
	\end{align*}
	hence
	\begin{align*}
	|x_0^{*}-x_0|^2\leq \epsilon\ \text{osc} u,
	\end{align*}
	and we also know $u(x_0^{*})-u(x_0)\leq u_\epsilon(x_0)-u(x_0)\leq 0$. Now by the continuity of $u$, we obtain (i).
	
	Take $x\in\bar{B}_R$, 
	\begin{align*}
	u_\epsilon(x_0)&\leq u(x)+\frac{1}{\epsilon}|x-x_0|^2\\
	&\leq u(x)+\frac{1}{\epsilon}|x-x_1|^2+\frac{2}{\epsilon}|x-x_1||x_1-x_0|+\frac{1}{\epsilon}|x_1-x_0|^2\\
	&\leq u(x)+\frac{1}{\epsilon}|x-x_1|^2+\frac{3}{\epsilon}\cdot 2R|x_1-x_0|.
	\end{align*}
	Take supremum over $x\in\bar{B}_R$, we obtain $\displaystyle	u_\epsilon(x_0)\leq u_\epsilon(x_1)+\frac{6R}{\epsilon}|x_1-x_0|$. Hence $\displaystyle|	u_\epsilon(x_0)-u_\epsilon(x_1)|\leq \frac{6R}{\epsilon}|x_1-x_0|$, so $u_\epsilon$ is locally Lipschitz.

	Define
	\begin{align*}
	P_0(x)=u(x_0^{*})+\frac{1}{\epsilon}|x-x_0^{*}|,
	\end{align*}
	so it has the property
	\begin{align*}
	&P_0(x_0)=u_\epsilon(x_0),\\
	&P_0(x)\geq u_\epsilon(x),\ x\in \bar B_{R}.
	\end{align*}
	That is, $P_0$ touches $u_\epsilon$ from the above at $x_0$ in $\bar{B}_R$. So for small $h$, $\Delta_h^2(u_\epsilon(x_0)-\frac{1}{\epsilon}|x_0|^2)\leq 0$,
	where
	\begin{align*}
	\Delta_h^2u(x_0):=\frac{u(x_0+h)+u(x_0-h)-2u(x_0)}{|h|^2}.
	\end{align*}
	This implies that $u_\epsilon(x)-\frac{1}{\epsilon}|x|^2$ is concave in $B_R$, so we have obtained (ii).
	
	Let $x_0\in B_{R/2}$, and let $P(x)$ be a paraboloid touches $u_\epsilon$ by below at $x_0$. Consider the paraboloid
	\begin{align*}
	Q(x)=P(x+x_0-x_0^{*})-\frac{1}{\epsilon}|x_0-x_0^{*}|^2.
	\end{align*}
	Since
	\begin{align*}|x_0^{*}-x_0|^2\leq \epsilon\ \text{osc} u,
	\end{align*}
	we can pick $\epsilon_0$ such that for $\forall\epsilon\leq\epsilon_0$, if $ x_0\in B_{R/2}$, then $x_0^{*}\in B_R$. Take any $x$ sufficient closed to $x_0^*$ so that $x+x_0-x_0^*\in B_R$.
	
	By the definition of $u_\epsilon$, 
	\begin{align*}
	u_\epsilon(x+x_0-x_0^*)\leq u(x)+\frac{1}{\epsilon}|x_0-x_0^*|^2.
	\end{align*}
	If $x$ is sufficiently close to $x_0^*$, then by the assumption on $P$, we obtain
	\begin{align*}
	u_\epsilon(x+x_0-x_0^*)\geq P(x+x_0-x_0^*).
	\end{align*}
	Therefore, we have
	\begin{align*}
	Q(x)=P(x+x_0-x_0^*)-\frac{1}{\epsilon}|x_0-x_0^*|^2\leq u(x).
	\end{align*}
	At $x_0^*$, we have, by definition,
	\begin{align*} Q(x_0^*)=P(x_0)-\frac{1}{\epsilon}|x_0-x_0^{*}|^2=u_\epsilon(x_0)-\frac{1}{\epsilon}|x_0-x_0^{*}|^2=u(x_0^*).
	\end{align*}
	This shows $Q$ is a paraboloid touching $u$ from below at $x_0^*$. Since $u$ is a viscosity supersolution, 
	\begin{align*}
	\lambda(A^{Q(x_0^*)})\in \bar{\Gamma}.
	\end{align*}
	However, $\nabla Q(x_0^*)=\nabla P(x_0)$, $\nabla^2 Q(x_0^*)=\nabla^2 P(x_0)$, so we obtain
	\begin{align*}
	\lambda(A^{P(x_0)})\in \bar{\Gamma}.
	\end{align*}
	This implies that $u_\epsilon$ is a viscosity supersolution. (iii) is proved.
\end{proof}

In the remaining part of this section, we establish the following asymptotic behavior of viscosity supersolution of (\ref{viscosity solution}).

\begin{prop}\label{Asymptotic behavior-1}
	Let $\Gamma=\Gamma_p$ for some $1<p\leq 2$, and let $u$ be a continuous viscosity supersolution of (\ref{viscosity solution}) in $\mathbb{R}^2\setminus B_{\frac{r_0}{2}}$ for some $r_0>0$.
	
	Then there exists $K_0>0$, such that
	\begin{align}\label{monotone}
	\inf_{\partial B_r}u(r)+4\ln r\text{ is monotonically nondecreasing for }r>K_0.
	\end{align}
	Consequently, $\displaystyle\liminf_{x\rightarrow \infty} \left(u(x)+4\ln |x|\right)>-\infty$.
\end{prop}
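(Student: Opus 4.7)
The plan is to reduce to a one–dimensional radial problem by taking the infimum over spheres, and then to derive a blow–up of the radial derivative which would occur if it dropped below $-4$.

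First I set $\bar u(r):=\inf_{\partial B_r}u$ and $\tilde u(x):=\bar u(|x|)$. Since $u$ is continuous on $\mathbb{R}^2\setminus B_{r_0/2}$ and $O(2)$ is compact, a short argument using convergent subsequences shows that $\bar u$, and hence $\tilde u$, is continuous. Each rotation $R\in O(2)$ is a M\"obius transformation, and since the eigenvalue function is invariant under orthogonal conjugation, $u\circ R$ is also a viscosity supersolution of $\lambda(A^{\,\cdot})\in\partial\Gamma$; by the standard principle that the infimum of a compact family of continuous viscosity supersolutions is again a viscosity supersolution, $\tilde u=\inf_{R\in O(2)}(u\circ R)$ is a (radial) continuous viscosity supersolution on $\mathbb{R}^2\setminus\bar B_{r_0/2}$.

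Next I apply the Jensen approximation of Lemma~\ref{Jensen Approximation} to $\tilde u$ on a large annulus, producing on an inner annulus $\Omega''=B_R\setminus\bar B_{r_0}$ a Lipschitz, a.e.\ twice-differentiable viscosity supersolution $\tilde u_\epsilon$ with $\lambda(A^{\tilde u_\epsilon})\in\bar\Gamma$ wherever it is twice-differentiable; the rotational invariance of the kernel $|y-x|^2$ keeps $\tilde u_\epsilon$ radial. Writing $v_\epsilon(t):=\tilde u_\epsilon(e^t)$ and computing in polar coordinates gives
\[
\lambda_1=\frac{e^{-v_\epsilon}}{e^{2t}}\bl-v_\epsilon''+v_\epsilon'+\tfrac14(v_\epsilon')^2\br,\qquad
\lambda_2=\frac{e^{-v_\epsilon}}{e^{2t}}\bl-v_\epsilon'-\tfrac14(v_\epsilon')^2\br
\]
almost everywhere. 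From $\Gamma\subset\Gamma_1$ one gets $-v_\epsilon''\ge 0$ a.e., so $v_\epsilon$ is concave, while the sharper condition $\lambda_2+(2-p)\lambda_1\ge 0$ rearranges to
$(2-p)(-v_\epsilon'')\ge(p-1)\bl v_\epsilon'+\tfrac14(v_\epsilon')^2\br$ a.e.

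Setting $\sigma(t):=-v_\epsilon'(t)$, which is nondecreasing by concavity, the above inequality reads, whenever $\sigma>4$,
\[
\sigma'(t)\ \ge\ \alpha\,\sigma(\sigma-4)\quad\text{a.e.,}\qquad \alpha:=\frac{p-1}{4(2-p)}>0
\]
(with $\alpha=+\infty$ when $p=2$, in which case $\lambda_2\ge 0$ directly forces $\sigma\le 4$). Suppose for contradiction that $\sigma(t_0)>4$ for some $t_0\in(\log(2r_0),\log(R/2))$. Since $\sigma$ is a bounded nondecreasing (hence BV) function, separation of variables gives
\[
\frac{\sigma(t)-4}{\sigma(t)}\ \ge\ \frac{\sigma(t_0)-4}{\sigma(t_0)}\,e^{4\alpha(t-t_0)},
\]
and since the left-hand side is bounded by $1$, $\sigma$ must blow up before the finite time $t^\ast=t_0+(4\alpha)^{-1}\log\!\bl\sigma(t_0)/(\sigma(t_0)-4)\br$. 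Choosing $R$ so large that $\log(R/2)>t^\ast$ then contradicts the uniform bound on $v_\epsilon'$ coming from the Lipschitz estimate of Lemma~\ref{Jensen Approximation}. Hence $\sigma\le 4$, i.e.\ $v_\epsilon'\ge-4$ a.e., and absolute continuity yields that $v_\epsilon(t)+4t$ is monotonically nondecreasing.

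By Lemma~\ref{Jensen Approximation}(i), $v_\epsilon\to v(t):=\bar u(e^t)$ uniformly on compact subsets as $\epsilon\to 0^+$, so $v(t)+4t$ is still nondecreasing; letting $R\to\infty$ gives the monotonicity of $\bar u(r)+4\log r$ for $r>K_0$. The final $\liminf$ statement is then immediate, since $u(x)+4\log|x|\ge\bar u(|x|)+4\log|x|\ge\bar u(K_0)+4\log K_0$ for $|x|\ge K_0$. The main obstacle is rigorously integrating the pointwise a.e.\ differential inequality in Step~5 for the merely Lipschitz (not $C^1$) function $v_\epsilon$; the Jensen regularization is essential precisely because it guarantees $v_\epsilon$ is Lipschitz and $\sigma$ is a bounded nondecreasing function, after which comparison with the explicit ODE solution $\sigma'=\alpha\sigma(\sigma-4)$ legitimates the integration.
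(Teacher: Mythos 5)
Your route is genuinely different from the paper's. You pass to the log variable $t=\log r$, observe that $v_\epsilon$ is actually concave in $t$ (not merely semi-concave) since the a.e.\ constraint $\lambda_1+\lambda_2\ge 0$ forces the absolutely continuous part of $v_\epsilon''$ to be nonpositive while the Jensen regularization already makes the singular part nonpositive, and then run a Gr\"onwall/ODE blow-up comparison for $\sigma=-v_\epsilon'$. The paper instead stays in the $r$-variable, tracks the auxiliary quantities $g(r)=1/\tilde v'(r)+r/4$ and $k(r)=r^{-1/p}g(r)$ on the bad set via Fatou-type difference-quotient arguments, and extracts a quantitative decay $g=o(1)$ on compacts as $R\to\infty$. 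Your concavity observation is a clean shortcut that avoids the paper's $h_m,\tilde h_m$ machinery, and your ODE comparison is morally the same as the paper's monotonicity of $k$.

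However, the contradiction at the end is logically flawed. You write: suppose $\sigma(t_0)>4$; then $\sigma$ blows up at $t^\ast=t_0+(4\alpha)^{-1}\log(\sigma(t_0)/(\sigma(t_0)-4))$; choosing $R$ so large that $\log(R/2)>t^\ast$ yields a contradiction; hence $\sigma\le 4$. But $\sigma=\sigma_{\epsilon,R}$ depends on $R$: enlarging $R$ produces a new regularized function and a new value of $\sigma(t_0)$, and if $\sigma_{\epsilon,R}(t_0)\to 4^+$ as $R\to\infty$ then $t^\ast\to\infty$ and no contradiction arises. For fixed $(\epsilon,R)$ the blow-up argument yields only a quantitative bound: if $t_0<\log R-T$, then $t^\ast$ must lie outside the domain, which forces $\sigma(t_0)\le 4+4/(e^{4\alpha T}-1)$; that is, $\sigma(t_0)\le 4+\delta$ with $\delta\to 0$ only as $R\to\infty$, not $\sigma(t_0)\le 4$. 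So $v_\epsilon(t)+4t$ need not be nondecreasing for any fixed $(\epsilon,R)$. You must carry the error term, getting $v_\epsilon(\beta)+4\beta\ge v_\epsilon(\alpha)+4\alpha-\delta(R)(\beta-\alpha)$, and obtain the monotonicity only after $R\to\infty$ together with $\epsilon\to 0$ --- which is precisely the structure of the paper's Case 2, where one shows $\tilde v'(x)+4/x>-o(1)$ uniformly on compacts. The fix is straightforward, but as written the step ``hence $\sigma\le 4$'' overclaims and the proof does not close.
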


\begin{proof}

\medskip

Let 
\begin{align*}
v(r):=\inf_{|x|=r}u(x).
\end{align*}
It follows that $v$ is a continuous viscosity supersolution of (\ref{viscosity solution}) in $\mathbb{R}^2\setminus B_{r_0}$ and $v$ is radially symmetric. 

Define
\begin{align*}
\tilde{v}_{\epsilon,R}(x)=\inf_{y\in \bar{B}_{2R}\backslash B_{
\frac{3r_0}{4}}}\{v(y)+\frac{1}{\epsilon}|y-x|^2\},\quad \forall\ x\in  \bar{B}_{2R}\backslash B_{\frac{3r_0}{4}},
\end{align*}
for $R>r_0+2$, $0<\epsilon<\epsilon_0(R)$, where $\epsilon_0(R)=\epsilon_0(\Omega',\Omega'')>0$ is as defined in Lemma \ref{Jensen Approximation} with $\Omega'=B_{2R}\backslash \bar{B}_\frac{3r_0}{4}$, $\Omega''=B_R\backslash \bar{B}_{r_0}$. Clearly, $\tilde{v}_{\epsilon,R}(x)$ is radially symmetric, and we will use $\tilde{v}_{\epsilon,R}(r)$ to denote $\tilde{v}_{\epsilon,R}(x)$, for $|x|=r$.

Since $\tilde{v}_{\epsilon,R}(r) -\frac{1}{\epsilon}r^2$ is concave,  by the Rademacher's theorem and the Alexandroff's theorem, $\tilde{v}_{\epsilon,R}$ is differentiable almost everywhere, $\tilde{v}_{\epsilon,R}'(r) -\frac{2}{\epsilon}r$ is monotonically nonincreasing, and $\tilde{v}_{\epsilon,R}$ is second order differentiable almost everywhere. More precisely, there exists $E_{\epsilon,R}\subset (r_0,R)$ with $|E_{\epsilon,R}\cap (r_0,R)|=R-r_0$, satisfying: 

For $r\in E_{\epsilon,R}$,

(i)$\displaystyle\tilde{v}_{\epsilon,R}'(r):=\lim_{h\rightarrow 0} \frac{\tilde{v}_{\epsilon,R}(r+h)-\tilde{v}_{\epsilon,R}(r)}{h}$ is finite.

(ii)$\displaystyle \tilde{v}_{\epsilon,R}''(r):=\lim_{h\rightarrow 0} \frac{\tilde{v}_{\epsilon,R}'(r+h)-\tilde{v}_{\epsilon,R}'(r)}{h}$ is finite.

where $\tilde{v}_{\epsilon,R}'(\cdot)$ is the extension to $(r_0,R)$ of $\tilde{v}'_{\epsilon,R}$ defined on $E_{\epsilon,R}$, given by
\begin{align*}
\tilde{v}_{\epsilon,R}'(t):=\lim_{\delta\rightarrow 0}\frac{1}{2\delta}\int_{t-\delta<s<t+\delta}\tilde{v}_{\epsilon,R}'(s)ds \text{ for }\forall\ t\in (r_0,R).
\end{align*}

(iii)$\displaystyle\lim_{h\rightarrow 0} \frac{|\tilde{v}_{\epsilon,R}(r+h)-\tilde{v}_{\epsilon,R}(r)-\tilde{v}_{\epsilon,R}'(r)h-\frac{1}{2}\tilde{v}_{\epsilon,R}''(r)h^2|}{h^2}=0$.

In the following, when there is no ambiguity, we use $\tilde{v}$ to denote $\tilde{v}_{\epsilon,R}$.

Denote $\lambda(A^{\tilde{v}})=(\lambda_1,\lambda_2)$, then  
\begin{align*}
\lambda_1(r)=\frac{1}{e^{\tilde{v}}}\left( -\tilde{v}^{\prime\prime}+\frac{1}{4}(\tilde{v}^\prime)^2\right),\quad \lambda_2(r)=\frac{1}{e^{\tilde{v}}} \left(-\frac{\tilde{v}^\prime}{r}-\frac{1}{4}(\tilde{v}^\prime)^2\right),\quad \text{in }E_{\epsilon,R}.
\end{align*}

By Lemma \ref{Jensen Approximation} (ii)(iii), $\tilde{v}'-\frac{2}{\epsilon}r$ is monotonically nonincreasing, and $\lambda(A^{\tilde{v}})\in\bar\Gamma$ in $E_{\epsilon,R}$,

We distinguish into two cases.

Case 1: $\Gamma=\Gamma_2$.

By Lemma \ref{Jensen Approximation} (iii), $(\lambda_1,\lambda_2)\in\bar{\Gamma}_2$ in $E_{\epsilon,R}$, so
\begin{align*}
\lambda_2=\frac{1}{e^{\tilde{v}}} \left(-\frac{\tilde{v}^\prime}{r}-\frac{1}{4}(\tilde{v}^\prime)^2\right)=-\frac{\tilde{v}^\prime}{e^{\tilde{v}}} \left(\frac{1}{r}+\frac{1}{4}\tilde{v}^\prime\right)\geq 0,\quad \text{in }E_{\epsilon,R}.
\end{align*}

It follows that

\begin{align*}
(\tilde{v}+4\ln r)^\prime\geq  0,\quad \text{in }E_{\epsilon,R}.
\end{align*}

Since $\tilde{v}$ is locally Lipschitz, $\tilde{v}+4\ln r$ is monotonically nondecreasing in $(r_0,R)$.
Sending $\epsilon\rightarrow 0$, using Lemma \ref{Jensen Approximation} (i),  we obtain (\ref{monotone}).

Case 2: $\Gamma_2\subsetneqq\Gamma\subsetneqq\Gamma_1$.

\medskip

	Define
	
	\begin{align*}
	\tilde{E}_{\epsilon,R}:=\{r\in E_{\epsilon,R}: \tilde{v}^\prime(r)<-\frac{4}{r}\}.
	\end{align*}

	When there is no confusion, denote
	\begin{align*}
	E=E_{\epsilon,R},\quad \tilde{E}=\tilde{E}_{\epsilon,R}.
	\end{align*}
	It is clear that

	\begin{align}\label{lambda1}
	\lambda_2=\frac{1}{e^{\tilde{v}}} \left(-\frac{\tilde{v}^\prime}{r}-\frac{1}{4}(\tilde{v}^\prime)^2\right)=-\frac{(\tilde{v}^\prime)^2}{e^{\tilde{v}}} \frac{(\frac{1}{\tilde{v}^\prime}+\frac{r}{4})}{r}<0,\quad\text{on }\tilde{E}.
	\end{align}
	
	Since $(\lambda_1,\lambda_2)\in\Gamma$, we also have $\lambda_1>0$ on $\tilde{E}$, so
	\begin{align}\label{lambda2}
	\lambda_1=\frac{1}{e^{\tilde{v}}}\left( -\tilde{v}^{\prime\prime}+\frac{1}{4}(\tilde{v}^\prime)^2\right)=\frac{(\tilde{v}^\prime)^2}{e^{\tilde{v}}}\left(\frac{1}{\tilde{v}^\prime}+\frac{r}{4}\right)^\prime>0,\quad \text{on }\tilde{E}.
	\end{align}
	
		Denote
	\begin{align*}
	g(r)=\frac{1}{\tilde{v}^\prime(r)}+\frac{r}{4}.
	\end{align*}
	
	From (\ref{lambda1}) and (\ref{lambda2}), we have
	\begin{align*}
	g>0,\quad g^\prime>0,\quad \text{in }\tilde{E}.
	\end{align*}
	
	We start with a lemma:
	\begin{lemm}\label{maxinteval}
	For $R>r_0+2$, $0<\epsilon<\epsilon_0(R)$, if $a\in \tilde{E}_{\epsilon,R}$, then
	\begin{align*}
	(a,R)\cap \tilde{E}_{\epsilon,R}=(a,R)\cap E_{\epsilon,R}.
	\end{align*}
	Moreover, $\forall\ a<\alpha<\beta<R$, $\alpha,\beta\in E$, we have
	\begin{align}\label{gmonotone}
	0<\int_{\alpha}^{\beta}g'(r)dr\leq g(\beta)-g(\alpha).
	\end{align}
    \end{lemm}	
    \begin{proof}
	
	Let $\bar{r}\in \tilde{E}$. By Lemma \ref{Jensen Approximation} (ii), $\tilde{v}^\prime-\frac{2}{\epsilon}r$ is monotonically nonincreasing, so
	\begin{align}\label{decreasingderi}
		\lim_{r\rightarrow \bar{r}^+}\tilde{v}'(r)\leq \tilde{v}'(\bar{r})\leq\lim_{r\rightarrow \bar{r}^-}\tilde{v}'(r).
	\end{align}
	Therefore,
	\begin{align*}
	\lim_{r\rightarrow \bar{r}^+}(\tilde{v}'(r)+\frac{4}{r})\leq \tilde{v}'(\bar{r})+\frac{4}{\bar{r}}<0.
	\end{align*}
	Hence there exists $\delta>0$, such that $\tilde{v}'(r)+\frac{4}{r}<0$ for any $\bar{r}<r<\bar{r}+\delta$, i.e. $(\bar{r}, \bar{r}+\delta)\cap E\subset \tilde{E}$.
	
	Since $a\in \tilde{E}$, $b:=\sup\{c:(a,c)\cap E\subset \tilde{E}\}$ is well defined.

		For $a<\alpha<s<\beta<b$, $\alpha,s,\beta\in E$, since $\tilde{v}'(r)-\frac{2}{\epsilon}r$ is monotonically nonincreasing, we have
		\begin{align*}
		\tilde{v}'(s)-\tilde{v}'(s+\frac{1}{m})\geq -\frac{2}{\epsilon}\frac{1}{m}.
		\end{align*}
		
		By the definition of $\tilde E$, we have, using $-\tilde{v}'(r)\geq \frac{4}{r}$, for any $a<r<b$, $r\in E$,
		\begin{align*}
		0\leq\frac{1}{\tilde{v}^\prime(r+\frac{1}{m})\tilde{v}^\prime(r)}\leq \frac{r(r+\frac{1}{m})}{16}.
		\end{align*}
		
		Therefore,
		\begin{align*}
	    m\left(\frac{1}{\tilde{v}^\prime(s+\frac{1}{m})}-\frac{1}{\tilde{v}^\prime(s)}\right)&=\frac{m(\tilde{v}^\prime(s)-\tilde{v}^\prime(s+\frac{1}{m}))}{\tilde{v}^\prime(s+\frac{1}{m})\tilde{v}^\prime(s)}\geq - \frac{s(s+\frac{1}{m})}{8\epsilon}\geq-\frac{b^2}{8\epsilon}.
		\end{align*}
		
		So let
		\begin{align*}
		h_m(s)=m\left(\frac{1}{\tilde{v}^\prime(s+\frac{1}{m})}-\frac{1}{\tilde{v}^\prime(s)}\right)\geq-\frac{b^2}{8\epsilon},\ \alpha<s<\beta.
		\end{align*}
		
		We have
		\begin{align*}
		\lim_{m\rightarrow \infty}h_m(s)=\left(\frac{1}{\tilde{v}^\prime(s)}\right)^\prime,\quad \text{in }E.
		\end{align*}
		
		By Fatou's lemma,
		\begin{align*}
		\int_{\alpha}^{\beta}\left(\frac{1}{\tilde{v}^\prime(s)}\right)^\prime ds\leq& \liminf_{m\rightarrow \infty}\int_{\alpha}^{\beta} h_m(s)ds\\
		=& \liminf_{m\rightarrow\infty}(-m\int_{\alpha}^{\alpha+\frac{1}{m}}\frac{1}{\tilde{v}'(s)}ds+m\int_{\beta}^{\beta+\frac{1}{m}}\frac{1}{\tilde{v}'(s)}ds)\\
		=&\frac{1}{\tilde{v}^\prime(\beta)}-\frac{1}{\tilde{v}^\prime(\alpha)}.
		\end{align*}
		
		Thus, using $g'>0$ in $\tilde{E}$,  (\ref{gmonotone}) follows.
		
		Now we will prove $b=R$.

		By (\ref{gmonotone}), using also $\tilde{v}'<0$ on $\tilde{E}$, there exists $\mu>0$, such that
		\begin{align*}
		\tilde{v}'+\frac{4}{r}<-\mu<0\text{ on }(\frac{a+b}{2},b)\cap E.
		\end{align*}
		
		If $b\neq R$, then by inequality (\ref{decreasingderi}), there exists $\delta>0$, such that
		\begin{align*}
		\tilde{v}'+\frac{4}{r}<-\frac{\mu}{2}<0\text{ on }[b,b+\delta)\cap E.
		\end{align*}

		This violates the definition of $b$.
		
		Thus $b=R$. Lemma \ref{maxinteval} is now proved.
    \end{proof}

	\medskip

	For given $R>r_0+2$ and $0<\epsilon<\epsilon_0(R)$, define $a_0^{\epsilon,R}$ to be:
	\begin{align*}
	a_0^{\epsilon,R}=\inf\{a:a\in \tilde{E}_{\epsilon,R},\text{ if }\tilde{E}_{\epsilon,R}\neq\varnothing\}.
	\end{align*}
	If $\tilde{E}_{\epsilon,R}=\varnothing$ then define $a_0^{\epsilon,R}=R$.
	
	If $\tilde{E}_{\epsilon,R}\neq\varnothing$, then $r_0\leq a_0^{\epsilon,R}<R$, and, by Lemma \ref{maxinteval}, $(a_0^{\epsilon,R},R)\cap \tilde{E}_{\epsilon,R}=(a_0^{\epsilon,R},R)\cap E_{\epsilon,R}$.

    Since $\Gamma_2\subsetneqq\Gamma\subsetneqq \Gamma_1$, there exists a unique constant $0<p<1$ such that $(1,-p)\in \partial \Gamma$.

    Note that
    \begin{align}\label{lambdaineq}
    \lambda_2>-p\lambda_1\quad\text{in }(a_0^{\epsilon,R},R)\cap \tilde{E}_{\epsilon,R}.
    \end{align}
	
	We will prove,
	\begin{align}\label{kmonotone}
	\alpha^{-\frac{1}{p}}g(\alpha)< {\beta}^{-\frac{1}{p}}g(\beta), \quad \text{for }a_0^{\epsilon,R}<\alpha<\beta<R.
	\end{align}

	Let $k(r)=r^{-\frac{1}{p}}g(r)$, we have, using (\ref{lambda1}), (\ref{lambda2}) and (\ref{lambdaineq}),
	\begin{align*}
	k'(r)>0,\text{ in }(a_0^{\epsilon,R},R)\cap \tilde{E}_{\epsilon,R}.
	\end{align*}

	Set $\tilde{h}_m(s)=m(k(s+\frac{1}{m})-k(s))$, $a_0^{\epsilon,R}<\alpha<s<\beta<R$, $\alpha,s,\beta\in \tilde{E}_{\epsilon,R}$, then using Lemma \ref{maxinteval}, we have
	\begin{align*}
	\tilde{h}_m(s)&=m((s+\frac{1}{m})^{-\frac{1}{p}}g(s+\frac{1}{m})-s^{-\frac{1}{p}}g(s))\geq m((s+\frac{1}{m})^{-\frac{1}{p}}-s^{-\frac{1}{p}})g(s)\geq -C
	\end{align*}
	and
	\begin{align*}
	\tilde{h}_m(s)\rightarrow k'(s), \text{ as }m\rightarrow\infty,\text{ for }a_0^{\epsilon,R}<\alpha<s<\beta<R,\ s\in \tilde{E}_{\epsilon,R}.
	\end{align*}

	By Fatou's lemma, we have
	\begin{align*}
	k(\beta)-k(\alpha)&=\liminf_{m\rightarrow \infty}\big(m \int_{\beta}^{\beta+\frac{1}{m}} k(s)ds-m\int_{\alpha}^{\alpha+\frac{1}{m}} k(s)ds\big)\\
	&=\liminf_{m\rightarrow \infty} \int_\alpha^\beta \tilde{h}_m(s)ds \geq \int_\alpha^\beta k'(s)ds>0,\ \alpha,\beta\in \tilde{E}_{\epsilon,R}.
	\end{align*}
	
	Hence (\ref{kmonotone}) follows.
\\

By Lemma \ref{Jensen Approximation} (i), for any $R>r_0+2$, there exists $\epsilon'(R)\rightarrow 0^+$ as $R\rightarrow\infty$, such that for $\forall\ 0<\epsilon<\epsilon'(R)$,
\begin{align}\label{estimatev}
|\tilde{v}_{\epsilon,R}(x)-v(x)|\leq e^{-R},\quad \text{for }\forall\ r_0\leq x\leq R.
\end{align}
We will have two cases:\\
Case 1: there exists $R_i\rightarrow\infty$, and $0<\epsilon_i<\epsilon'(R_i)$, such that $a_0^{\epsilon_i,R_i}\rightarrow \infty$.
\\
Case 2: there exists $R_i\rightarrow\infty$, and $0<\epsilon_i<\epsilon'(R_i)$, such that $a_0^{\epsilon_i,R_i}\rightarrow K_0<\infty$.
\\
For Case 1:

For any $r_0<\alpha<\beta$, $\beta<a_0^{\epsilon_i,R_i}$ for large $i$. So
\begin{align*}
\frac{d}{dr}(\tilde{v}_{\epsilon_i,R_i}(r)+4\ln r)=\tilde{v}'_{\epsilon_i,R_i}(r)+\frac{4}{r}\geq 0,\quad\forall\ \alpha\leq r\leq \beta.
\end{align*}
Integrate on $[\alpha,\beta]$, by Lemma \ref{Jensen Approximation} (ii),
\begin{align*}
\tilde{v}_{\epsilon_i,R_i}(\beta)+4\ln \beta\geq\tilde{v}_{\epsilon_i,R_i}(\alpha)+4\ln \alpha.
\end{align*}
Send $i$ to $\infty$, using (\ref{estimatev}),
\begin{align*}
v(\beta)+4\ln \beta\geq v(\alpha)+4\ln \alpha.
\end{align*}
\ 
\\
For Case 2:

For any $K_0<\alpha<\beta<\infty$, $[\alpha,\beta]\subset (a_0^{\epsilon_i,R_i},\frac{R_i}{2})$ for large $i$.
\\
Using (\ref{kmonotone}),  we have
\begin{align*}
\frac{1}{4}\cdot\frac{R_i}{2}>\frac{1}{\tilde{v}'_{\epsilon_i,R_i}(\frac{R_i}{2})}+\frac{1}{4}\cdot\frac{R_i}{2}>\frac{g(x)}{x^{\frac{1}{p}}}(\frac{R_i}{2})^{\frac{1}{p}},\quad \text{for }x\in [\alpha,\beta].
\end{align*}
Since $0<p<1$, it follows that
\begin{align*}
\frac{1}{\tilde{v}'_{\epsilon_i,R_i}(x)}+\frac{x}{4}<C R_i^{1-1/p}=o(1),\quad \text{ uniform for }x\in [\alpha,\beta].
\end{align*}
Therefore,
\begin{align*}
\frac{d}{dx}(\tilde{v}_{\epsilon_i,R_i}(x)+4\ln x)=\tilde{v}'_{\epsilon_i,R_i}(x)+\frac{4}{x}>o(1),\text{ uniform for }x\in [\alpha,\beta].
\end{align*}
By Lemma \ref{Jensen Approximation} (ii), integrating on $x$, we obtain
\begin{align*}
\tilde{v}_{\epsilon_i,R_i}(\beta)+4\ln{\beta}>\tilde{v}_{\epsilon_i,R_i}(\alpha)+4\ln \alpha+o(1).
\end{align*}
Sending $i\rightarrow\infty$, using (\ref{estimatev}) and $R_i\rightarrow\infty$, we obtain
\begin{align*}
v(\beta)+4\ln{\beta}\geq v(\alpha)+4\ln \alpha.
\end{align*}
\\
Proposition \ref{Asymptotic behavior-1} is now proved.

\end{proof}

\section{Liouville Theorem}\label{proofLiou}

In this section, we prove Theorem \ref{nondege}. Given the asymptotic behavior established in Section \ref{viscosity}, we can handle the possible singularity of $u$  at infinity either by following the proof in \cite[Theorem 1.3]{LL2}, or by invoking a general result in \cite{CLN}. Here we give the latter.

Recall:

\noindent {\bf Theorem A.} \cite[Theorem 1.1]{CLN}\ \label{touching}
For $n\geq 1$, let $\Omega$ be a domain in $\mathbb{R}^n$ containing the origin, and let $F\in C^1(\Omega,\mathbb{R},\mathbb{R}^n,\mathcal{S}^{n\times n})$ satisfy
\begin{align*}
-\frac{\partial F}{\partial M_{ij}}(x,s,p,M)>0,\quad \forall (x,s,p,M)\in\Omega\times\mathbb{R}\times\mathbb{R}^n\times\mathcal{S}^{n\times n}.
\end{align*}
Assume that $u\in C^2(\Omega\backslash\{0\})$ satisfies
\begin{align}\label{superhar}
\text{For any }V\in\mathbb{R}^n, w(x):=u(x)+V\cdot x\text{ satisfies }\inf_{B_r\backslash\{0\}}w=\min_{\partial B_r}w,\ \forall \ 0<r<\bar{r},
\end{align}
for some $\bar r>0$, $v\in C^2(\Omega)$, and
\begin{align*}
u&>v\quad \text{in }\Omega\backslash\{0\},\\
F(x,u,\nabla u,\nabla^2 u)&\geq F(x,v,\nabla v,\nabla^2 v)\quad \text{in }\Omega\backslash\{0\}.
\end{align*}
Then
\begin{align*}
\liminf_{x\rightarrow 0}(u-v)(x)>0.
\end{align*}

\begin{rema}
	In $n\geq 2$, a superharmonic function $u\in C^0(B_1\backslash\{0\})$ satisfying $\displaystyle\inf_{B_1\backslash\{0\}}u>-\infty$ has the above property (\ref{superhar}).
\end{rema}

\medskip
\noindent
Proof of Theorem \ref{nondege}:

We use the method of moving spheres. The proof is similar to that of \cite[Theorem 1.3]{LL2}, see also the proofs of \cite[Theorem 1.1]{LZ} and \cite[Theorem 1.1]{LZhu}.

To begin with, we define
\begin{align*}
u_{x,\lambda}(y)=u(x+\frac{\lambda^2(y-x)}{|y-x|^2})-4\ln\frac{|y-x|}{\lambda}.
\end{align*}
Denote $F(A^u)=f(\lambda(A^u))$.
Notice that if $F(A^u)=1$, then $F(A^{u,\lambda})=1$, where we have used the invariance property of $A^u$ and $F$.

For the sake of convenience, denote
\begin{align*}
u_{\lambda}(y)=u(\frac{\lambda^2y}{|y|^2})-4\ln\frac{|y|}{\lambda}.
\end{align*}

\begin{lemm}\label{startmove}
For every $x\in \mathbb{R}^2$, there exists $\lambda_0(x)>0$ such that $u_{x,\lambda}(y)\leq u(y)$, for all $0<\lambda<\lambda_0(x)$ and $|y-x|\geq \lambda$.
\end{lemm}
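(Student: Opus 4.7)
Parametrize $y = x + r\theta$ with $r = |y-x| \geq \lambda$ and $\theta \in \mathbb{S}^1$; the inverted point is $x + s\theta$ with $s := \lambda^2/r \leq \lambda$. Since $rs = \lambda^2$, one computes $4\ln(r/\lambda) = 2\ln(r/s)$, and the desired inequality $u_{x,\lambda}(y) \leq u(y)$ rewrites as
$$
\psi_\theta(s) \leq \psi_\theta(r), \qquad \psi_\theta(t) := u(x+t\theta) + 2\ln t.
$$
The plan is to establish this for all $(r,\theta)$ with $r \geq \lambda$ by splitting the range of $r$ into a close range $[\lambda, r_0]$ and a far range $[r_0, \infty)$, controlled by completely different tools.

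\textbf{Close range.} Set $M := \|\nabla u\|_{L^\infty(\overline{B_1(x)})}$, which is finite because $u \in C^2$, and fix any $r_0 \in (0, \min(1, 2/M))$. Then
$$
\psi_\theta'(t) = \theta \cdot \nabla u(x + t\theta) + \frac{2}{t} \geq -M + \frac{2}{t} > 0 \qquad \text{for } 0 < t \leq r_0,
$$
so $\psi_\theta$ is strictly increasing on $(0, r_0]$. For $\lambda \leq r_0$ and $\lambda \leq r \leq r_0$, both $s = \lambda^2/r \leq \lambda$ and $r$ lie in $(0, r_0]$, hence $\psi_\theta(s) \leq \psi_\theta(r)$ automatically.

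\textbf{Far range.} Here the key is to invoke Proposition \ref{Asymptotic behavior-1}. A $C^2$ solution $u$ of $f(\lambda(A^u)) = 1$ with $\lambda(A^u) \in \Gamma$ is automatically a continuous viscosity supersolution of $\lambda(A^u) \in \partial\Gamma$: if $\varphi \in C^2$ touches $u$ from below at $x_0$, then $\nabla^2\varphi(x_0) \leq \nabla^2 u(x_0)$ implies $A^\varphi(x_0) \geq A^u(x_0)$ as symmetric matrices, and monotonicity of eigenvalues in the symmetric convex cone $\Gamma \supset \Gamma_2$ under adding a positive semidefinite matrix yields $\lambda(A^\varphi(x_0)) \in \overline{\Gamma}$. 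The proposition then supplies constants $C_1, R_1 > 0$ with $u(y) \geq -4\ln|y| - C_1$ for $|y| \geq R_1$. Combined with continuity of $u$ on the compact annulus $\{r_0 \leq |y-x| \leq R_2\}$ for an $R_2 = R_2(x)$ large enough that $|y-x| \geq R_2$ forces $|y| \geq R_1$ and $|y| \leq 2|y-x|$, one extracts a constant $C_2 = C_2(x)$ such that
$$
u(y) \geq -4\ln|y-x| - C_2 \qquad \text{for all } |y-x| \geq r_0.
$$
On the other hand, for $0 < \lambda \leq r_0/2$ the inverted point stays in $\overline{B_{r_0/2}(x)}$, hence
$$
u_{x,\lambda}(y) \leq M_1 + 4\ln\lambda - 4\ln|y-x|, \qquad M_1 := \max_{\overline{B_{r_0/2}(x)}} u.
$$
Subtracting, $u_{x,\lambda}(y) \leq u(y)$ holds as soon as $4\ln\lambda \leq -M_1 - C_2$. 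Taking $\lambda_0(x) := \min\{r_0/2,\, \exp((-M_1 - C_2)/4)\}$ closes the argument.

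\textbf{Main obstacle.} This is a standard initiation step for the method of moving spheres, and its genuine content lies in the far range: one needs a lower bound of the form $u(y) \gtrsim -4\ln|y|$ at infinity, because the natural upper bound for $u_{x,\lambda}(y)$ decays exactly like $-4\ln|y-x|$ plus a $\lambda$-dependent constant. Proposition \ref{Asymptotic behavior-1} is precisely what delivers this; without such an asymptotic bound, no smallness of $\lambda$ could absorb a rapid decay of $u$ at infinity. The close-range piece, by contrast, is routine $C^2$ calculus once the right auxiliary function $\psi_\theta$ is identified.
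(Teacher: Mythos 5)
Your proof is correct and follows essentially the same strategy as the paper: a close-range step showing the auxiliary function $u(x+t\theta)+2\ln t$ is increasing in $t$ near $t=0$, and a far-range step invoking Proposition \ref{Asymptotic behavior-1} to get a lower bound of order $-4\ln|y|$ on $u$ at infinity and then absorbing it by taking $\lambda$ small. Your extra remarks (that a $C^2$ solution with $\lambda(A^u)\in\Gamma$ is automatically a viscosity supersolution of $\lambda(A^u)\in\partial\Gamma$, and the explicit choice $r_0<\min(1,2/M)$) are correct and simply make explicit what the paper leaves implicit.
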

\begin{proof}
Consider the function $u(r,\theta)+2\ln r$, we have
\begin{align*}
\frac{\partial}{\partial r}\left(u(r,\theta)+2\ln r\right)=\frac{\partial u}{\partial r}+\frac{2}{r}.
\end{align*}

Thus there exists $r_0>0$, such that for all $0<r<r_0$, we have
\begin{align*}
\frac{\partial}{\partial r}\left(u(r,\theta)+2\ln r\right)>0.
\end{align*}

It follows that for $0<r_1\leq  r_2<r_0$, we have
\begin{align*}
u(r_1,\theta)+2\ln r_1\leq u(r_2,\theta)+2\ln r_2.
\end{align*}

Choose $r_2=|y|$, $r_1=\dfrac{\lambda^2}{|y|}$, then for all $0<\lambda\leq |y|<r_0$, we have
\begin{align*}
u(\frac{\lambda^2}{|y|},\theta)+2\ln\frac{\lambda^2}{|y|}\leq u(|y|,\theta)+2\ln |y|,
\end{align*}

i.e.
\begin{align*}
u(\frac{\lambda^2}{|y|},\theta)-4\ln\frac{|y|}{\lambda}\leq u(|y|,\theta).
\end{align*}

It follows that for all $0<\lambda\leq |y|<r_0$, we have
\begin{align*}
u_{0,\lambda}(y)\leq u(y).
\end{align*}

Now let us consider $|y|\geq r_0$. By Proposition \ref{Asymptotic behavior-1}, we have
\begin{align*}
\liminf_{|y|\rightarrow \infty} (u(y)+4\ln|y|)>-\infty.
\end{align*}

Thus there exists a constant $a$, such that for all $|y|\geq r_0$, we have
\begin{align*}
u(y)+4\ln|y|\geq a.
\end{align*}

Then for $|y|\geq r_0\geq \lambda$, we have
\begin{align*}
&u(y)-u_{0,\lambda}(y)\\
=& u(y)-u(\frac{\lambda^2 y}{|y|^2})+4\ln\frac{|y|}{\lambda}\\
=&u(y)+4\ln |y|-u(\frac{\lambda^2 y}{|y|^2})-4\ln\lambda\\
\geq & a -u(\frac{\lambda^2 y}{|y|^2})-4\ln\lambda.
\end{align*}

It follows that there exists $0<\lambda_0\leq r_0$, such that for all $|y|\geq r_0$, $0<\lambda\leq \lambda_0$, we have
\begin{align*}
u(y)\geq u_{0,\lambda}(y).
\end{align*}

The lemma is now proved.

\end{proof}

\bigskip

By Lemma \ref{startmove}, we can define for $x\in\mathbb{R}^2$,
\begin{align*}
\bar{\lambda}(x):=\sup\{\mu:u_{x,\lambda}(y)\leq u(y), |y-x|\geq \lambda,0<\lambda<\mu\}\in (0,+\infty].
\end{align*}

By Proposition \ref{Asymptotic behavior-1},
\begin{align*}
\alpha:=\liminf_{|x|\rightarrow \infty}(u(x)+4\ln|x|)\in (-\infty,+\infty].
\end{align*}

We have two cases:

Case 1: $\alpha=+\infty$.

We will prove that this case does not occur. We first prove that 
\begin{align}\label{lambdainfty}
\bar{\lambda}(x)=+\infty\text{ for all }x\in \mathbb{R}^2.
\end{align}
Once (\ref{lambdainfty}) is proved, by Lemma \ref{Calculus lemma}, we obtain that $u$ must be constant and therefore $A^u=0$. This violates the condition $\lambda(A^u)\in\Gamma$. Hence Case 1 does not occur.

Without loss of generality, we only need to prove (\ref{lambdainfty}) for $x=0$. Suppose $\bar{\lambda}:=\bar{\lambda}(0)<\infty$. For each $\lambda>0$ fixed, we have 
\begin{align*}
u(y)-u_{0,\lambda}(y)=u(y)+4\ln |y|-u(\frac{\lambda^2 y}{|y|^2})-4\ln\lambda,\ y\in\mathbb{R}^2\backslash\{0\}.
\end{align*}

Since $\alpha=+\infty$, we have, for every $\lambda>0$, $u(y)-u_{0,\lambda}(y)\rightarrow +\infty$ as $|y|\rightarrow\infty$. It follows that there exists $M>0$, such that
\begin{align}\label{ulambdalesu1}
u_{0,\lambda}(y)<u(y),\quad 0\leq \lambda\leq \bar{\lambda}+1,\quad |y|\geq M.
\end{align}
Since $F(A^u)=1$ is M\"{o}bius invariant, we have $F(A^{u_\lambda})=1$. Therefore, by the condition $\partial_{\lambda_i}f>0$ in $\Gamma$, there exists a linear second order elliptic operator $L$, such that 
\begin{align*}
&L(u-u_{\bar{\lambda}})=0,\quad \mathbb{R}^2\setminus B_{\bar{\lambda}},\\
&u-u_{\bar{\lambda}}=0,\quad \partial B_{\bar{\lambda}}.
\end{align*}

By the maximum principle and the Hopf Lemma, we have
\begin{align}
&u-u_{\bar{\lambda}}>0,\quad \mathbb{R}^2\setminus B_{\bar{\lambda}},\label{ugequlambda}\\
&\frac{\partial}{\partial r}(u-u_{\bar{\lambda}})>0,\quad \partial B_{\bar{\lambda}}.\label{hopf}
\end{align}

By the compactness of $\partial B_{\bar{\lambda}}$ and (\ref{hopf}), we have
\begin{align*}
\frac{\partial}{\partial r}(u-u_{\bar{\lambda}})|_{\partial B_{\bar{\lambda}}}\geq b>0.
\end{align*}

By the continuity of $\nabla u$, there exists $\delta>0$, such that
\begin{align*}
\frac{\partial}{\partial r}(u-u_{\lambda})\geq \frac{b}{2},\quad \bar{\lambda}\leq \lambda\leq \bar{\lambda}+\delta,\quad \lambda\leq |y|\leq \lambda+\delta.
\end{align*}

Since $u=u_\lambda$ on $\partial B_\lambda$, we have
\begin{align}\label{case1ineq1}
u_{\lambda}\leq u,\quad \bar{\lambda}\leq \lambda\leq \bar{\lambda}+\delta,\quad \lambda\leq |y|\leq \lambda+\delta.
\end{align}

On the other hand, (\ref{ugequlambda}) implies
\begin{align*}
u_{\bar{\lambda}}<u,\quad \bar{\lambda}+\delta\leq y\leq M.
\end{align*}

By the continuity of $u$, there exists $0<\epsilon<\delta$, such that
\begin{align}\label{case1ineq2}
u_{\lambda}\leq u,\quad \bar{\lambda}\leq \lambda\leq \bar{\lambda}+\epsilon,\quad \bar{\lambda}+\delta\leq y\leq M.
\end{align}

By (\ref{ulambdalesu1}), (\ref{case1ineq1}) and (\ref{case1ineq2}), we have proved
\begin{align*}
u_{\lambda}\leq u, \quad \bar{\lambda}\leq \lambda\leq \bar{\lambda}+\epsilon,\quad |y|\geq \lambda.
\end{align*}

This violates the definition of $\bar{\lambda}$, and (\ref{lambdainfty}) is proved.

\bigskip

Case 2: $\alpha\in \mathbb{R}$.

We first claim that $\bar\lambda(x)<\infty$, $\forall\ x\in\mathbb{R}^2$. 

In fact, for all $|y-x|\geq \lambda$, $0<\lambda<\bar{\lambda}(x)$, we have
\begin{align*}
u_{x,\lambda}(y)\leq u(y),
\end{align*}

i.e.
\begin{align*}
u(x+\frac{\lambda^2(y-x)}{|y-x|^2})-4\ln\frac{|y-x|}{\lambda}\leq u(y).
\end{align*}
Fix $x\in\mathbb{R}^2,\lambda>0$, let $y\rightarrow\infty$, and then let $\lambda\rightarrow\bar{\lambda}(x)$, it follows that
\begin{align*}
\alpha\geq u(x)+4\ln\bar{\lambda}(x), \text{ for all }x\in\mathbb{R}^2.
\end{align*}

Therefore, $\bar\lambda(x)<\infty$, $\forall\ x\in\mathbb{R}^2$. The claim is proved.

\medskip

Next, we will prove 
\begin{align}\label{uequlambda}
u_{x,\bar{\lambda}(x)}\equiv u\text{ on }\mathbb{R}^2\backslash\{x\},\ \forall\ x\in\mathbb{R}^2.
\end{align}
Once (\ref{uequlambda}) is proved, Theorem \ref{nondege} follows from Lemma \ref{Calculuslemma2}

Without loss of generality, we only need to prove (\ref{uequlambda}) for $x=0$. We still denote $\bar\lambda=\bar\lambda(0)$. It suffices to prove
\begin{align}\label{stopmov}
u_{\bar\lambda}\equiv u,\quad\text{on }\mathbb{R}^2\setminus B_{\bar{\lambda}}.
\end{align}

From the definition of $\bar\lambda$,
\begin{align*}
u_{\bar\lambda}\leq u,\quad\text{on }\mathbb{R}^2\setminus B_{\bar{\lambda}}.
\end{align*}

Suppose on the contrary that (\ref{stopmov}) does not hold, then by the strong maximum principle,
\begin{align*}
u_{\bar\lambda}< u,\quad\text{on }\mathbb{R}^2\setminus B_{\bar{\lambda}}.
\end{align*}

This is equivalent to
\begin{align*}
u_{\bar\lambda}> u,\quad\text{on }B_{\bar{\lambda}}\backslash\{0\}.
\end{align*}

By Theorem A, we know
\begin{align*}
\liminf_{|y|\rightarrow0}(u_{\bar{\lambda}}(y)-u(y))>0,
\end{align*}

namely,
\begin{align*}
\liminf_{|y|\rightarrow\infty}(u(y)-u_{\bar{\lambda}}(y))>0.
\end{align*}

Hence there exists $\epsilon_0>0$ and $M>0$, such that
\begin{align*}
u(y)\geq u_{\bar{\lambda}}(y)+\epsilon,\ \forall\ |y|>M
\end{align*}

It follows that there exists $\delta>0$, such that
\begin{align*}
u(y)\geq u_{\lambda}(y),\quad 0\leq \lambda\leq \bar{\lambda}+\delta,\quad \forall\ |y|\geq M.
\end{align*}

Similar to Case 1, by using the Hopf Lemma, there exists $\epsilon>0$, such that
\begin{align*}
u(y)\geq u_{\lambda}(y),\quad 0\leq \lambda\leq \bar{\lambda}+\epsilon,\quad |y|\geq \lambda.
\end{align*}
This violates the definition of $\bar{\lambda}$. 

Therefore, we have proved (\ref{uequlambda}).

\section{Conformal Invariance and M\"{o}bius Invariance}\label{classi}
We first prove Proposition \ref{Mobiusinv}:
\begin{proof}
	\ 
	Let $x\in\mathbb{R}^2$, $s\in\mathbb{R}$, $p\in\mathbb{R}^2$, $M\in \mathcal S^{2\times 2}$, and $O\in O(2)$. Let $u$ be a smooth function satisfying $u(x)=s$, $\nabla u(x)=p$, and $\nabla^2 u(x)=M$. Consider $\psi(z)=z+x$. Now evaluate (\ref{Mobiusfor}) at the origin, we have
	\begin{align*}
	H(0,s,p,M)=H(x,s,p,M).
	\end{align*}
	So $H$ is independent of $x$. In the following, we denote $H(s,p,M)=H(0,s,p,M)$.
	
	Now define another smooth function $u$ satisfying $u(0)=s$, $\nabla u(0)=0$, and $\nabla^2 u(0)=M$. Consider $\psi(z)=Oz$, then evaluating (\ref{Mobiusfor}) at the origin gives:
	\begin{align}\label{ortho}
	H(s,0,O^{-1}MO)=H(s,0,M).
	\end{align}
	
	Next define a function $u$ satisfying $u(0)=s$, $\nabla u(0)=p$, and $\nabla^2 u(0)=M$. Consider $\psi(z)=e^{-s/2}z$, so $u_\psi(z)=u(e^{-s/2}z)-s$. Evaluating (\ref{Mobiusfor}) at the origin gives:
	\begin{align}\label{dia}
	H(0,e^{-s/2}p,e^{-s}M)=H(s,p,M).
	\end{align}
	
	Finally, define $\displaystyle\alpha=\frac{-4p}{|p|^2}\in \mathbb{R}^2$, and pick a smooth function $u$ satisfying $u(\alpha)=s$, $\nabla u(\alpha)=p$, and $\nabla^2 u(\alpha)=M$. Consider the M\"{o}bius transformation $\displaystyle\psi(z)=\frac{16z}{|p|^2|z|^2}$. By direct computation, $\displaystyle u_\psi(z)=u(\frac{16z}{|p|^2|z|^2})-4\ln |z|+4\ln 4-4\ln |p|$, and  $\displaystyle\psi^{-1}(\alpha)=\frac{-4p}{|p|^2}$.
	
	We can easily check that:
	\begin{align*}
	u_\psi\circ\psi^{-1}(\alpha)=u(\alpha)=s
	\end{align*}
	and
	\begin{align*}
	\nabla u_\psi\circ \psi^{-1}(\alpha)=0.
	\end{align*}
	So
	\begin{align*}
	A^{u_\psi}\circ\psi^{-1}(\alpha)=-\frac{\nabla^2 u_\psi}{e^{u_\psi}}\circ\psi^{-1}(\alpha)=-\frac{\nabla^2 u_\psi\circ\psi^{-1}(\alpha)}{e^s}.
	\end{align*}
	By a direct computation, we can check the property that for any $y\in\mathbb{R}^2$ and $\psi$ M\"{o}bius,
	\begin{align*}
	A^{u_\psi}\circ\psi^{-1}(y)\sim A^u(y),
	\end{align*}
	where the notation $A\sim B$ means $A$ and $B$ are orthogonally similar to each other.
	Hence
	\begin{align*}
	-\frac{\nabla^2 u_\psi\circ\psi^{-1}(\alpha)}{e^s}\sim A^u(\alpha),
	\end{align*}
	i.e.
	\begin{align}\label{simrela}
	\nabla^2 u_\psi\circ\psi^{-1}(\alpha)\sim -e^s A^u(\alpha).
	\end{align}	
	Evaluate (\ref{Mobiusfor}) at $\psi^{-1}(\alpha)$, using also (\ref{ortho}), (\ref{dia}) and (\ref{simrela}),
	\begin{align*}
	H(s,p,M)&=H(s,0,\nabla^2 u_\psi\circ \psi^{-1}(\alpha))\\
	&=H(s,0,-e^s A^u(\alpha))\\
	&=H(0,0,-A^u(\alpha)).
	\end{align*}
	The proposition follows.
\end{proof}
\ 
Then we prove Proposition \ref{conformalinv}:
\begin{proof}
	\ 
	
	It is easy to see that we still have
	\begin{align*}
	H(\cdot,u,\nabla u,\nabla^2 u)=F(A^u),
	\end{align*}
	where $F$ is invariant under orthogonal conjugation. However, we observe that for general meromorphic functions, the relation $A^{u_\psi}\sim A^u\circ\psi$ does not hold. Now we are going to prove that $F$ must be a function of the trace.
	
	Set $\psi(z)=iz^2$, i.e. $\psi(x,y)=(2xy,-x^2+y^2)$. Set $u(x,y)=ax^2$. So
	\begin{align*}
	A^u\circ\psi=\frac{-1}{e^{4ax^2y^2}}\begin{pmatrix}
	2a-4a^2x^2y^2 & 0 \\
	0 & 4a^2x^2y^2
	\end{pmatrix}.
	\end{align*}
	Evaluate $(\ref{1})$ at $(0,y)$, we have
	\begin{align*}
	A^u\circ\psi(0,y)=\begin{pmatrix}
	-2a & 0 \\
	0 & 0
	\end{pmatrix}.
	\end{align*}
	Similarly, by a direct computation,
	\begin{align*}
	u_\psi(x,y)=4ax^2y^2+\ln 4+\ln (x^2+y^2),\\
	A^{u_\psi}(0,y)=\begin{pmatrix}
	-2a-\frac{3}{4y^4} & 0 \\
	0 & \frac{3}{4y^4}
	\end{pmatrix}.
	\end{align*}
	Take appropriate $a$ and $y$, the relation $F(A^u)\circ\psi=F(A^{u_\psi})$ implies that
	\begin{align*}
	F(\begin{pmatrix}
	\lambda_1 & 0 \\
	0 & \lambda_2
	\end{pmatrix})=F(\begin{pmatrix}
	\lambda_1+\lambda_2 & 0 \\
	0 & 0
	\end{pmatrix}),
	\end{align*}
	for any $\{\lambda_1,\lambda_2\}$ satisfying that at least one of them is positive. Negative case comes from evaluating $(\ref{1})$ at $(x,0)$. Hence $F$ must be a function of the trace, i.e. $F(A^u)=g(-e^{-u}\Delta u)$. Moreover, it is straightforward to check that $g(-e^{-u}\Delta u)$ is conformally invariant.
\end{proof}

\appendix
\section{Two Calculus Lemmas}\label{appendix}

We now state two calculus lemmas for the reader's convenience.
\begin{lemm}\label{Calculus lemma}
Let $u\in C^1(\mathbb{R}^2)$ satisfy 
\begin{align*}
u(x+\frac{\lambda^2(y-x)}{|y-x|^2})-4\ln\frac{|y-x|}{\lambda}\leq u(y),  \quad \textit{for all} \quad  \lambda>0, \quad x\in\mathbb{R}^2, \quad |y-x|\geq\lambda.
\end{align*}
Then $u$ must be constant.
\end{lemm}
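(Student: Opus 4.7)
The plan is to extract a two-sided equality $u(p)=u(q)$ from the one-sided hypothesis by placing the inversion center $x$ on the line through two arbitrary points $p,q$ and pushing it to infinity, so that the inversion $y\mapsto x+\lambda^{2}(y-x)/|y-x|^{2}$ degenerates to the identity map.

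First I would recast the hypothesis in a symmetric form. Setting $z:=x+\lambda^{2}(y-x)/|y-x|^{2}$, the points $z$ and $y$ lie on a common ray from $x$ with $|z-x|\,|y-x|=\lambda^{2}$ and $|z-x|\le\lambda\le|y-x|$; in particular $|y-x|/\lambda=\sqrt{|y-x|/|z-x|}$, so the assumption is equivalent to
\[
u(z)-u(y)\;\le\;2\ln\frac{|y-x|}{|z-x|}
\]
whenever $x,z,y$ are collinear with $z$ and $y$ on the same side of $x$ and $|z-x|\le|y-x|$. Conversely, any such triple arises from $\lambda=\sqrt{|z-x|\,|y-x|}$, so the two forms are genuinely equivalent.

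For the main step, fix $p\ne q$ in $\mathbb{R}^{2}$ and for $t>0$ set $x_{t}:=q+t(q-p)/|q-p|$; then $p,q,x_{t}$ are collinear with $q$ strictly between $p$ and $x_{t}$, $|q-x_{t}|=t$, and $|p-x_{t}|=t+|q-p|$. Applying the reformulated inequality with $x=x_{t}$, $z=q$, $y=p$ yields
\[
u(q)-u(p)\;\le\;2\ln\!\left(1+\frac{|q-p|}{t}\right),
\]
and letting $t\to\infty$ gives $u(q)\le u(p)$. Placing $x_{t}$ on the opposite ray (past $p$ rather than past $q$) and repeating the same computation yields $u(p)\le u(q)$. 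Hence $u(p)=u(q)$, and since $p,q$ were arbitrary, $u$ is constant.

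There is essentially no obstacle: the regularity $u\in C^{1}$ is not used (continuity would suffice, and even that is only needed implicitly through the pointwise inequality). The single point to verify is the admissibility $|q-x_{t}|\le\lambda\le|p-x_{t}|$ of the chosen configuration, which follows from $t\le\sqrt{t(t+|q-p|)}\le t+|q-p|$.
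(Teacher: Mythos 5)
Your proof is correct. The reformulation $u(z)-u(y)\le 2\ln\bigl(|y-x|/|z-x|\bigr)$ for collinear $x,z,y$ with $z,y$ on the same ray from $x$ and $|z-x|\le|y-x|$ is an exact restatement of the hypothesis (since $|z-x|\,|y-x|=\lambda^2$ forces $4\ln(|y-x|/\lambda)=2\ln(|y-x|/|z-x|)$), and pushing the inversion center $x_t$ to infinity along the line through $p$ and $q$ makes the right-hand side $2\ln(1+|q-p|/t)\to 0$, giving $u(q)\le u(p)$; the symmetric choice gives the reverse inequality, hence $u\equiv\text{const}$. The admissibility check $|q-x_t|\le\sqrt{t(t+|q-p|)}\le|p-x_t|$ is also right.

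Your route is genuinely different from the paper's. The paper substitutes $f=e^u$, rewrites the hypothesis as
\begin{align*}
\Bigl(\tfrac{\lambda}{|y-x|}\Bigr)^{4}f\Bigl(x+\tfrac{\lambda^2(y-x)}{|y-x|^2}\Bigr)\leq f(y),
\end{align*}
and then cites a known calculus lemma \cite[Lemma 11.1]{LZ} (see also \cite[Lemma 3.3]{LZhu}) that identifies all functions satisfying such a Kelvin-type comparison inequality; the conclusion that $f$, hence $u$, is constant is read off from that lemma. Your argument is entirely self-contained and more elementary: it extracts the conclusion directly by a one-line limiting computation with the inversion center sent to infinity, and as you note it does not use the $C^1$ hypothesis (nor even continuity). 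What the paper's approach buys is brevity and uniformity with the companion Lemma \ref{Calculuslemma2}, which needs the full strength of the same cited lemma (the equality case with finite $\lambda(x)$ for every $x$); what yours buys is transparency and independence from the external reference. Both are valid.
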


\begin{proof}
Let $f=e^u$, we have
\begin{align*}
(\frac{\lambda}{|y-x|})^4f(x+\frac{\lambda^2(y-x)}{|y-x|^2})\leq f(y),  \quad \textit{for all} \quad  \lambda>0, \quad x\in\mathbb{R}^2, \quad |y-x|\geq\lambda.
\end{align*}
By \cite[Lemma 11.1]{LZ} (see also \cite[Lemma 3.3]{LZhu}), we conclude that $f$ is a constant, hence $u$ is a constant.
\end{proof}

\begin{lemm}\label{Calculuslemma2}
	Let $u\in C^1(\mathbb{R}^2)$. Suppose that for every $x\in\mathbb{R}^2$, there exists $\lambda(x)>0$ such that
	\begin{align*}
	u_{x,\lambda(x)}(y)=u(y),\quad y\in\mathbb{R}^2\backslash\{x\}.
	\end{align*}
	Then for some $a>0$, $b>0$, $\bar{x}\in\mathbb{R}^2$,
	\begin{align*}
	u(x)=2\ln\frac{8a}{8|x-\bar{x}|^2+b}.
	\end{align*}
\end{lemm}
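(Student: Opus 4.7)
My plan is to reduce the statement to showing that a certain positive $C^1$ function is a quadratic polynomial, which I can do by extracting its asymptotic behavior at infinity and then using a clean algebraic manipulation of the functional equation. Set $g(y):=e^{-u(y)/2}\in C^1(\mathbb{R}^2)$, so $g>0$. The identity $u_{x,\lambda(x)}=u$ exponentiates to
\[
g(\phi_x(y))\,|y-x|^2 \;=\; \lambda(x)^2\, g(y),\qquad \phi_x(y):=x+\lambda(x)^2(y-x)/|y-x|^2,
\]
valid for all $y\neq x$. Letting $|y|\to\infty$ at any fixed $x$ sends $\phi_x(y)\to x$ and $g(\phi_x(y))\to g(x)$, so $g(y)/|y-x|^2\to g(x)/\lambda(x)^2$. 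Since the left-hand side is independent of $x$, there is a constant $L>0$ with $g(y)/|y|^2\to L$ as $|y|\to\infty$ and $\lambda(x)^2=g(x)/L$ for every $x$.

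The main computation is to split $g(y)=L|y|^2+f(y)$ and substitute into the functional equation. A direct expansion yields $|\phi_x(y)|^2|y-x|^2=|x|^2|y|^2+2(\lambda(x)^2-|x|^2)x\cdot y+(\lambda(x)^2-|x|^2)^2$, and the identification $\lambda(x)^2-|x|^2=f(x)/L$ makes the pure quadratic parts on the two sides of the functional equation cancel. After collecting terms carefully one arrives at the remarkably clean relation
\[
\lambda(x)^2\bigl(f(y)-f(x)\bigr) \;=\; |y-x|^2\bigl(f(\phi_x(y))-f(x)\bigr),
\]
valid for all $x\neq y$. I expect this cancellation to be the main obstacle: the algebra must be handled precisely, though its form is forced by the $C^1$ asymptotic of $g$ already established.

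Once this identity is in hand, the conclusion follows swiftly from $C^1$-regularity. Sending $|y|\to\infty$ with $x$ fixed, $|\phi_x(y)-x|=\lambda(x)^2/|y-x|\to 0$, so Taylor expansion at $x$ gives $f(\phi_x(y))-f(x)=\lambda(x)^2\nabla f(x)\cdot(y-x)/|y-x|^2+o(1/|y|)$. Multiplying by $|y-x|^2$ and equating with $\lambda(x)^2(f(y)-f(x))$ yields
\[
f(y) \;=\; \nabla f(x)\cdot y+\bigl(f(x)-\nabla f(x)\cdot x\bigr)+o(|y|) \qquad (|y|\to\infty).
\]
Since the left-hand side does not depend on $x$, subtracting the expansions for two different points $x_1,x_2$, dividing by $|y|$, and letting $|y|\to\infty$ along an arbitrary unit direction $\omega$ forces $(\nabla f(x_1)-\nabla f(x_2))\cdot\omega=0$, hence $\nabla f(x_1)=\nabla f(x_2)$. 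Thus $\nabla f$ is a constant vector $a\in\mathbb{R}^2$, and $f(y)=a\cdot y+b$. Therefore $g(y)=L|y|^2+a\cdot y+b=L|y-\bar x|^2+c$ after completing the square, with $\bar x=-a/(2L)$ and $c=b-|a|^2/(4L)$; positivity of $g$ forces $c>0$. Converting back via $u=-2\ln g$ and rescaling constants ($a_{\text{lem}}:=1/L$, $b_{\text{lem}}:=8c/L$) recovers precisely the formula $u(y)=2\ln\tfrac{8a}{8|y-\bar x|^2+b}$ claimed in the statement.
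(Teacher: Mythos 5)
Your proof is correct and takes a genuinely different route from the paper. The paper's proof is a one-line reduction: writing $f=e^u$, the hypothesis becomes the classical ``conformal'' functional equation $(\lambda/|y-x|)^4 f(x+\lambda^2(y-x)/|y-x|^2)=f(y)$, and the conclusion $f(x)=(a/(d+|x-\bar x|^2))^2$ is then quoted from \cite[Lemma 11.1]{LZ} (see also \cite[Lemma 3.7]{LZhu}). You instead set $g=e^{-u/2}$, derive $g(\phi_x(y))|y-x|^2=\lambda(x)^2 g(y)$, extract the quadratic growth rate $L=\lim_{|y|\to\infty}g(y)/|y|^2=g(x)/\lambda(x)^2$, split $g=L|\cdot|^2+f$, and, after verifying that the leading quadratic terms cancel (your identity $\lambda(x)^2(f(y)-f(x))=|y-x|^2(f(\phi_x(y))-f(x))$ is indeed equivalent to the original equation under this decomposition, as a direct check confirms), use the $C^1$ Taylor expansion at $\phi_x(y)\to x$ to show $\nabla f$ is a constant vector, hence $g$ is a positive quadratic polynomial. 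In effect you are giving a self-contained, elementary proof of the calculus lemma the paper merely cites, specialized to $n=2$, power $-4$, and the positive branch; this buys transparency and independence from external references, at the cost of some algebra. One stylistic remark: reusing the letter $a$ both for the constant gradient $\nabla f$ and for the constant $a$ in the final formula invites confusion, and the phrase ``the left-hand side is independent of $x$'' applied to $g(y)/|y-x|^2$ is a small abuse (the quantity whose limit is $x$-independent is $g(y)/|y|^2$), but neither affects correctness.
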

\begin{proof}
	Let $f=e^u$, we have, for every $x\in\mathbb{R}^2$, there exists $\lambda(x)>0$ such that
	\begin{align*}
	(\frac{\lambda}{|y-x|})^4f(x+\frac{\lambda^2(y-x)}{|y-x|^2})=f(y),\quad y\in\mathbb{R}^2\backslash\{x\}.
	\end{align*}
	By \cite[Lemma 11.1]{LZ} (see also \cite[Lemma 3.7]{LZhu}),
	\begin{align*}
	f(x)=\pm(\frac{a}{d+|x-\bar x|^2})^2.
	\end{align*}
	Lemma \ref{Calculuslemma2} follows.
\end{proof}

\end{document}